\pgfplotsset{every axis/.append style={
                    axis x line=middle,    
                    axis y line=middle,    
                    axis line style={-,color=blue}, 
                    xlabel={$x$},          
                    ylabel={$y$},          
            }}
\pgfplotsset{compat=1.13}
\DeclareMathOperator{\Hom}{Hom}
\def\RHS{$\mathbb QH S^3$}
\def\cS{{\mathcal S}}
\def\V{{V}}
\def\cI{{\mathcal I}}
\def\ZZ{\mathbb{Z}}
\def\CC{\mathbb{C}}
\def\QQ{\mathbb{Q}}
\def\cO{\mathcal{O}}
\def\cF{\mathcal{F}}
\def\div{\textrm{div}}
\def\tt{\mathbf{t}}
\def\ZK{Z_K}
\newtheorem{thm}{Theorem}[section]  
\newtheorem{prop}{Proposition}[section]
\newtheorem{cor}{Corollary}[section]
\newtheorem{lemma}{Lemma}[section]
\newtheorem{def-lemma}{Definition-Lemma}[section]
\theoremstyle{remark}
\newtheorem{rem}{Remark}[section]
\theoremstyle{definition}
\newtheorem{dfn}{Definition}[section]
\newtheorem{exam}{Example}[section]
\let\c@lemma\c@thm
\let\c@prop\c@thm
\let\c@propdef\c@thm
\let\c@proper\c@thm
\let\c@problem\c@thm
\let\c@conj\c@thm
\let\c@cor\c@thm
\let\c@rem\c@thm
\let\c@dfn\c@thm
\let\c@notation\c@thm
\let\c@exam\c@thm
\title[Delta invariant of curves on rational surfaces I. The analytic approach]
{Delta invariant of curves on rational surfaces I.\\ The analytic approach}
\author[J.I. Cogolludo]{Jos{\'e} Ignacio Cogolludo-Agust{\'i}n}
\address{Departamento de Matem\'aticas, IUMA\\
Universidad de Zaragoza\\
C.~Pedro Cerbuna 12\\
50009 Zaragoza, Spain}
\email{jicogo@unizar.es}
\author[T. L\'aszl\'o]{Tam\'as L\'aszl\'o}
\address{Alfr\'ed R\'enyi Institute of Mathematics,
Hungarian Academy of Sciences,
Re\'altanoda utca 13-15, H-1053, Budapest, Hungary}
\email{laszlo.tamas@renyi.hu}
\author[J. Mart\'in]{Jorge Mart\'in-Morales}
\address{Centro Universitario de la Defensa-IUMA\\
Academia General Militar\\
Ctra.~de Huesca s/n.\\
50090, Zaragoza, Spain}
\email{jorge@unizar.es}
\author[A. N\'emethi]{Andr\'as N\'emethi}
\address{Alfr\'ed R\'enyi Institute of Mathematics,
Hungarian Academy of Sciences,
Re\'altanoda utca 13-15, H-1053, Budapest, Hungary \newline
 \hspace*{4mm} ELTE - University of Budapest, Dept. of Geometry, Budapest, Hungary \newline \hspace*{4mm}
BCAM - Basque Center for Applied Math.,
Mazarredo, 14 E48009 Bilbao, Basque Country, Spain}
\email{nemethi.andras@renyi.hu }
\subjclass[2010]{Primary. 14B05, 32Sxx; Secondary. 14E15 }
\keywords{Normal surface singularities, delta invariant of curves, Riemann--Roch Theorem, rational surface singularities}
\thanks{The first and third authors are partially supported by MTM2016-76868-C2-2-P and
Gobierno de Arag{\'o}n (Grupo de referencia ``{\'A}lgebra y Geometr{\'i}a'')
cofunded by Feder 2014-2020 ``Construyendo Europa desde Arag\'on''.
The third author is also partially supported by FQM-333 ``Junta de Andaluc{\'\i}a''. \\
The second and fourth authors are supported by NKFIH Grant ``\'Elvonal (Frontier)'' KKP 126683.
The second author was also supported by ERCEA Consolidator Grant 615655 - NMST, and partially
by the Basque Government through the BERC 2018-2021 program and Gobierno Vasco Grant IT1094-16,
by the Spanish Ministry of Science, Innovation and Universities: BCAM Severo Ochoa accreditation SEV-2017-0718.}
\begin{document}

\begin{abstract}
We prove that if $(C,0)$ is a reduced curve germ  on a rational surface singularity $(X,0)$ then
its delta invariant can be recovered by a concrete expression associated with
 the  embedded topological
type of the pair $C\subset X$. Furthermore, we also identify it with another (a priori)
embedded analytic invariant, which is motivated by the theory of adjoint ideals.
Finally, we connect our formulae with the local correction term
at singular points of the global Riemann--Roch formula, valid for projective normal surfaces, introduced by Blache.
\end{abstract}

\maketitle

\section{Introduction}
 The central object of the present paper is the germ of a reduced curve  on a
complex normal surface singularity. We wish to understand the behavior of crucial
invariants with respect to the analytic--topological comparison, and also
with respect to their role and local contributions
in the global geometry of Weil divisors on normal projective surfaces.

\subsection{}
First we discuss the local aspects. Let $(X,0)$ be a normal surface singularity  and
$(C,0)\subset (X,0)$ a reduced curve germ  on it. Let $r$  be the number of
irreducible components of $(C,0)$; this is the only topological invariant of the abstract curve germ
$(C,0)$.

Probably the most important numerical analytic invariant of the abstract curve $(C,0)$ is its delta invariant $\delta(C)$ (for definition
and several properties see e.g \cite{BG80,StevensThesis} or
section \ref{s:delta} here). Our guiding question is whether $\delta(C)$ can be read  from
the local embedded topological type of the pair $(C,0)\subset (X,0)$. For example, if
$(C,0)$ is Cartier, cut out by the local equation $f:(X,0)\to (\CC,0)$, then by \cite{BG80}
$2\delta(C)=r-1+\mu(f)$, where $\mu(f)$ is the Milnor number of $f$, which definitely can be determined from the embedded topological type (e.g. via A'Campo's formula \cite{AC},
or from the fact that the $\ZZ$--covering of $X\setminus C$ associated with the representation
$\pi_1(X\setminus C)\to \ZZ$  given by the Milnor fibration is homotopically the Milnor fiber).
However, if $C$ is not Cartier, then we cannot expect in general an embedded topological type
characterization of $\delta(C)$ (for a detailed discussion when
$(X,0)$ is a particular minimally elliptic singularity see  Example \ref{ex:NON}). Still,
one of the main results of the present note is that if we assume that $(X,0)$ is rational
then  such a characterization is possible.

Since this characterization is rather delicate, let us present it with more details.
For simplicity we will assume that the link $\Sigma$ of our normal surface singularity
is a rational homology sphere, denoted by \RHS\ (rational singularities satisfy this restriction).

We fix a good embedded resolution $\pi:\tilde X\to X$ of the pair $C\subset X$. As usual,
we consider the combinatorial package of the resolution (for details see section \ref{s:GenAn}):
$E=\pi^{-1}(0)$ is the exceptional curve, $\cup_vE_v$  is its decomposition into irreducible components, $L=H_2(\tilde X, \ZZ)=\ZZ\langle E_v\rangle_v$ is the lattice of $\pi$ endowed
with the negative definite intersection form $(E_v,E_w)_{v,w}$.
We identify the dual lattice $L'$ with those
rational cycles $\ell'\in L\otimes \QQ$ for which $(\ell',\ell)\in \ZZ$ for any $\ell\in L$.
Then it turns out that $L'/L$ is the finite group
$H_1(\partial \tilde X,\ZZ)$ ($\partial \tilde X=\Sigma$), which will be denoted by $H$.
Set $[\ell']$ for the class of $\ell'\in L'$ in $H$.

Let $K_\pi\in L'$ be the {\it canonical cycle}  of $\pi$, see (\ref{eq:KX}),
and we set for any $\ell'\in L'$ the
Riemann--Roch expression $\chi(\ell'):=-(\ell', \ell'+K_\pi)/2$: if $\ell\in L$ is effective then
$\chi(\ell)=h^0(\cO_\ell)-h^1(\cO_\ell)$.

Next, regarding $(C,0)$, we consider the strict transform $\widetilde{C}\subset \tilde X$. The embedded topological type is basically coded in the information that
 how many components of  $\widetilde{C}$ intersect each $E_v$, that is, in the intersection numbers
 $(\widetilde{C},E_v)_{\tilde X}$. Then we define the rational cycle $\ell'_C\in L'$ associated
 with $C$ having the property that $(\ell_C', E_v)+(\widetilde {C}, E_v)_{\tilde X}=0$ for every
 vertex  $v$.
(Hence, $\ell_C'+\widetilde {C}$, as a rational divisor, is numerically trivial; usually
it is denoted by $\pi^*(C)$, the {\it total transform of $C$},  a notation that will be used in the sequel.)

The first numerical embedded topological invariant we will consider is $\chi(-\ell'_C)$ (for several motivating examples see the body of the article). However, in our characterization we will need another,
a more subtle term, as well.

Let $\cS'$ be the Lipman (antinef) cone $\{\ell'\in L'\,:\, (\ell', E_v)\leq 0\ \mbox{for all $v$}\}$. By the negative definiteness of $(-,-)$ we know that $\cS'$ sits in the first quadrant
of $L\otimes {\mathbb R}$, and also
for any $h\in H$ there exists a unique $s_h\in \cS'$ such that $[s_h]=h$ and $s_h$ is minimal with these two properties. The cycle $s_h$ is zero only if $h=0$, and it usually is rather arithmetical and 
hard to find  explicitly. (For some concrete examples see~\cite{NemOSZ}.)

\begin{thm}\label{thm:INTR1}
If $(X,0)$ is rational then
$$\delta(C)=\chi(-\ell_C')-\chi(s_{[-K_\pi+\ell'_C]}).$$
In particular, $\delta(C)$ depends only on the embedded topological type of the
pair $(X,C)$.
\end{thm}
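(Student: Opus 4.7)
The plan is to compute $\delta(C)$ cohomologically on the embedded resolution $\pi\colon\tilde X\to X$ and then to simplify by Riemann--Roch, using rationality of $(X,0)$ at every step to eliminate higher direct images.

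The starting point is the identity
\begin{equation*}
\delta(C)\;=\;h^1\bigl(\tilde X,\,\cO_{\tilde X}(-\widetilde C)\bigr),
\end{equation*}
obtained by pushing forward the exact sequence $0\to\cO_{\tilde X}(-\widetilde C)\to\cO_{\tilde X}\to\cO_{\widetilde C}\to 0$ through $\pi$. One identifies $\pi_*\cO_{\tilde X}(-\widetilde C)=\cI_C$ (the Weil ideal sheaf of $C$ in $\cO_X$) and $\pi_*\cO_{\widetilde C}=\widetilde{\cO}_C$ (the normalization of $\cO_C$), and invokes $R^1\pi_*\cO_{\tilde X}=0$ by rationality to produce a four-term exact sequence whose last nonzero piece is $R^1\pi_*\cO_{\tilde X}(-\widetilde C)\cong\widetilde{\cO}_C/\cO_C$, of dimension $\delta(C)$.

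Next, I would reduce to a computation on a compact cycle. For an effective $Z\in L$ supported on $E$ with coefficients sufficiently large, the Grauert/Laufer-type vanishing $h^1(\cO_{\tilde X}(-\widetilde C-Z))=0$ together with $0\to\cO_{\tilde X}(-\widetilde C-Z)\to\cO_{\tilde X}(-\widetilde C)\to\cO_Z(-\widetilde C)\to 0$ gives $\delta(C)=h^1(Z,\cO_Z(-\widetilde C))$. Applying Riemann--Roch on $Z$ (namely $\chi(\cO_Z(-\widetilde C))=\chi(\cO_Z)-(\widetilde C,Z)$), Artin's vanishing $h^1(\cO_Z)=0$ for effective $Z\in L$ (again rationality), and the numerical relation $(\widetilde C,E_v)=-(\ell'_C,E_v)$ from $\pi^*C=\widetilde C+\ell'_C$, a direct manipulation with $\chi(\ell')=-(\ell',\ell'+K_\pi)/2$ yields
\begin{equation*}
\delta(C)\;=\;\chi(-\ell'_C)\;-\;\bigl[\chi(Z-\ell'_C)-h^0(\cO_Z(-\widetilde C))\bigr].
\end{equation*}
The theorem is thus reduced to the identification of the bracketed quantity with $\chi(s_{[-K_\pi+\ell'_C]})$ for all sufficiently large $Z$, independently of $Z$.

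The final and most delicate step is this identification. Serre duality on the projective cycle $Z$ rewrites $h^0(\cO_Z(-\widetilde C))$ as $h^1$ of the adjoint twist $\omega_Z(\widetilde C)$, whose class in $H=L'/L$ works out to $[K_\pi-\ell'_C]$; the Riemann--Roch involution $\ell'\mapsto -K_\pi-\ell'$ built into $\chi(\ell')=\chi(-K_\pi-\ell')$ sends this class to $[-K_\pi+\ell'_C]$, explaining the appearance of exactly that class in the statement. The content is then to show --- using rationality for a third time --- that the bracketed quantity coincides with the minimum of $\chi$ on the coset $[-K_\pi+\ell'_C]+L$ intersected with the Lipman cone $\cS'$, and that this minimum is attained at the distinguished minimal antinef representative $s_{[-K_\pi+\ell'_C]}$. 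This last identification is the main obstacle of the proof: passing from the purely analytic expression $h^0(\cO_Z(-\widetilde C))$ to a topological minimization over the finite group $H$ is available precisely because $(X,0)$ is rational, via a Laufer-type computation-sequence argument ensuring that no analytic information beyond the intersection data and the arithmetic of $\cS'$ survives.
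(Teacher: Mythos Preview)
Your first three steps are sound and closely parallel the paper. The identity $\delta(C)=h^1(\cO_{\tilde X}(-\widetilde C))$ is exactly the paper's item~(2) with $p_g=0$, and your Riemann--Roch manipulation on a large $Z$ is a valid route to the intermediate expression. The paper streamlines this slightly: instead of restricting to $Z$, it uses that for rational $(X,0)$ one has ${\rm Pic}(\tilde X)=L'$, so the numerically trivial $\widetilde C+\ell'_C$ forces $\cO_{\tilde X}(-\widetilde C)\cong\cO_{\tilde X}(\ell'_C)$; a generalized Laufer sequence from $-\ell'_C$ up to $s_{-h}$ (each step preserving $h^1-\chi$), together with Lipman's vanishing $h^1(\cO_{\tilde X}(-s_{-h}))=0$, then gives directly
\[
\delta(C)=\chi(-\ell'_C)-\chi(s_{-h}),\qquad h=[\ell'_C].
\]
This is equivalent to your bracketed term equaling $\chi(s_{-h})$, and up to here everything is standard.

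The genuine gap is your final step. What remains is precisely the duality
\[
\chi(s_{-h})=\chi(s_{[Z_K]+h}),
\]
which is the paper's item~(3b) and its main technical contribution; the paper exhibits a non-rational graph where it fails already for $h=0$. You correctly point to Serre duality, but ``a Laufer-type computation-sequence argument'' does not close the gap. The paper's proof reduces (via such a sequence and Lipman vanishing) to a \emph{new} vanishing
\[
h^1\bigl(\cO_{\tilde X}(-Z_K+s_h)\bigr)=0\quad\text{for every }h\in H,
\]
which follows from neither Grauert--Riemenschneider nor Lipman (the relevant Chern class $Z_K-s_h$ lies in neither $Z_K+\cS'$ nor $\cS'$ in general). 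Its proof combines Serre duality on a large $\ell\in L$, the \emph{generalized} Grauert--Riemenschneider vanishing with fractional twist to obtain $h^0(\cO_\ell(\ell-r_h))=0$, and a short diagram chase using the equality $\cF(r_h)=\cF(s_h)$ of divisorial filtration ideals to upgrade this to $h^0(\cO_\ell(\ell-s_h))=0$. Finally, your framing via ``the minimum of $\chi$ on the coset intersected with $\cS'$'' is not a working handle: $s_h$ is defined as the order-minimal element of $\cS'_h$, and while $\chi(s_h)=\min_{\cS'_h}\chi$ does hold in the rational case, that is itself a corollary of Lipman vanishing and gives no independent way to identify your bracket with it.
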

This  generalizes the main results of \cite{ji-JJ-numerical}, valid when  $(X,0)$ is a cyclic quotient.
The  message of the statement  is the same as the message of the
articles \cite{CDGZ-Moncurve,cdg,CDGZ-PScurves},
where the (analytic) semigroup of $C$ is compared with the Alexander polynomial of the embedding.
(The topological connections of the present manuscript with multivariable Poincar\'e series
will be treated in a forthcoming manuscript \cite{NEW}.)

In the proof of the statement we needed  as an intermediate step the following  `duality'  relation,
valid for any rational singularity and any $h\in H$:
\begin{equation}\label{eq:INTR1}
\chi(s_{[-K_{\pi}]+h})=\chi(s_{-h}).
\end{equation}

E.g., if $(C,0)$  is Cartier, then $[\ell'_C]=0$, and  $\chi(s_{[-K_\pi+\ell'_C]})=
\chi(s_{[-\ell'_C]})=\chi(s_0)=\chi(0)=0$, hence Theorem \ref{thm:INTR1} reads as
$\delta(C)=\chi(-\ell_C')$. In some sense, the difference
$\delta (C)-\chi(-\ell_C')$ measures non--triviality of the class of $C$ in
${\rm Weil}(X)/{\rm Cartier}(X)$ (we  will make this statement more precise
in \ref{ss:GLOBAL} below).

Though this identity (\ref{eq:INTR1})
 is topological in nature, it is the trace of the analytic/algebraic
Serre  duality. It  does not extend to any non--rational singularity,
cf. Example \ref{ex:DOESNOT}.

\subsection{}
The term $-K_\pi+\ell_C'$ suggests some relationship with adjoint  ideals, and
indeed, there exists another numerical embedded analytic invariant, motivated by the theory of
adjoints and constants of quasiadjunctions, which (by the next Theorem \ref{thm:INTR2})
equals the left (and the right)  hand side of Theorem
\ref{thm:INTR1}. The literature of adjoints is extensive, for applications in local singularity theory
one can consult e.g. several articles of Libgober (see for instance~\cite{Libgober-alexander,Libgober-characteristic})
and his school (e.g. \cite{ji-tesis}). Let us give a simple
example how one can produce such an invariant.
Consider the ordinary cusp $(C,0)$ given by the equation $f(x,y)=x^2-y^3=0$ in $(X,0)=(\CC^2,0)$.
The 2-form $w=\frac{dx \wedge dy}{f}$ can be pulled back to the (say, minimal)
embedded  resolution $\pi$ of $C\subset X$. We wish to find the ideal of germs $g$ such that the pullback of $g\omega$
has no pole along the exceptional curves. E.g., the local equation of $\pi^*w$ at the
intersection of the `last' exceptional divisor $\{u=0\}$ and the strict transform $\widetilde C=\{v=0\}$ is given by
$\frac{du \wedge dv}{u^2v}$. On the other hand, for any $g\in {\mathfrak m}_{X,0}$, the form $\pi^*(gw)$ has
poles only along $\widetilde C$. Hence  (by checking other points as well)
the wished ideal is ${\mathfrak m}_{X,0}$
with $\kappa_{X,0}(C):=\dim_\CC \cO_{X,0}/\mathfrak{m}_{X,0}=1$.
Note that $\delta(C)=1$ too, and the equality is not just a coincidence.

We  define this new invariant  (as a novelty of this note and  as a conceptual generalization of the
$\kappa$--invariant considered e.g. in \cite{ji-tesis,CM,ji-JJ-numerical})  via the
equivariant Hilbert series  $H(\tt)=\sum_{\ell'\in L'}{\mathfrak{h}}(\ell')\tt^{\ell'}$ of $(X,0)$.
Here ${\mathfrak{h}}(\ell')$ stays as the codimension in the local algebra of the universal abelian cover
of the ideal associated with $\ell'$ by the equivariant divisorial filtration of $\pi$.
It is one of the strongest analytic invariants of $(X,0)$. (For more see \ref{s:Pseries}.)
Now, having the  Weil divisor $C$, we define the kappa--invariant of $C\subset X$ by
$$\kappa_X(C):=\mathfrak{h}(-K_\pi+\ell'_C).$$
 \begin{thm} \label{thm:INTR2}
 If $(X,0)$ is rational and $(C,0)$ is a reduced curve on it  then
$$\delta(C)=\kappa_X(C).$$
 \end{thm}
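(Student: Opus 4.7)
The plan is to reduce Theorem \ref{thm:INTR2} to a combinatorial formula for the equivariant Hilbert function $\mathfrak{h}$ of a rational singularity, and then prove that formula using equivariant vanishing on the resolution. The first ingredient is the symmetry $\chi(\ell')=\chi(-K_\pi-\ell')$, valid for every $\ell'\in L'$; applied with $\ell'=-K_\pi+\ell'_C$ it yields $\chi(-\ell'_C)=\chi(-K_\pi+\ell'_C)$, so Theorem \ref{thm:INTR1} becomes
$$\delta(C)=\chi(-K_\pi+\ell'_C)-\chi(s_{[-K_\pi+\ell'_C]}).$$
Hence it suffices to show, for rational $(X,0)$ and for $\ell'=-K_\pi+\ell'_C$, the identity
$$\mathfrak{h}(\ell')=\chi(\ell')-\chi(s_{[\ell']}).$$

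For the second step I would work directly from the definition of $\mathfrak{h}$ in terms of the equivariant divisorial filtration on the universal abelian cover $\tilde Y\to X$. Pulling the filtration back to an equivariant resolution and splitting by isotypical components of the $H$-action, the piece labelled by $[\ell']$ of the quotient computing $\mathfrak{h}(\ell')$ is identified with a difference of two spaces of sections of natural equivariant line bundles on $\tilde X$ supported in the class $[\ell']$. Rationality of $(X,0)$ implies the vanishing of the corresponding $H^1$'s (the standard Grauert--Riemenschneider/Kawamata--Viehweg-type vanishing available on rational surface singularities), so each dimension collapses to an Euler characteristic: the term $\chi(\ell')$ encodes the ``upper'' space, while $\chi(s_{[\ell']})$ records the normalization by the smallest antinef representative in the isotypical class $[\ell']$---below $s_{[\ell']}$ no nontrivial equivariant section exists. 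The duality \eqref{eq:INTR1}, already used for Theorem \ref{thm:INTR1}, can be brought in to check that the two sides transform compatibly. I would then confirm that the specific cycle $-K_\pi+\ell'_C$ lies in the range where this identity is valid, namely that $-K_\pi+\ell'_C\geq s_{[-K_\pi+\ell'_C]}$; this follows from $-K_\pi\in\cS'$ for rational singularities together with $\ell'_C$ being $\QQ_{\geq 0}$-supported on $E$ (since it is determined by an effective strict transform) and the minimality characterization of $s_h$.

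The main obstacle is precisely the second step: identifying $\mathfrak{h}(\ell')$ with a difference of two Euler characteristics requires careful unpacking of the equivariant divisorial filtration on the universal abelian cover isotypical-component by isotypical-component, and the associated equivariant vanishing on $\tilde X$. Steps 1 and 3 are essentially formal bookkeeping using the $\chi$-symmetry and the positivity of $-K_\pi$; all the analytic content of Theorem \ref{thm:INTR2} therefore lives in the combinatorial identity above, from which Theorem \ref{thm:INTR1} then delivers the conclusion.
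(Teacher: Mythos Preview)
Your overall strategy is the paper's own: reduce Theorem~\ref{thm:INTR2} to the identity
\[
\mathfrak{h}(Z_K+\ell'_C)=\chi(Z_K+\ell'_C)-\chi(s_{[Z_K+\ell'_C]}),
\]
and then quote Theorem~\ref{thm:INTR1} together with the $\chi$--symmetry $\chi(-\ell'_C)=\chi(Z_K+\ell'_C)$. This is exactly Lemma~\ref{eq:kappanew2} (which in turn rests on \eqref{eq:kappanew}, \eqref{eq:hell} and \eqref{eq:rhsh}). So Steps~1 and the reduction are fine.

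Where your proposal goes wrong is in the justification of Step~3. You claim that the identity above holds because $-K_\pi+\ell'_C\geq s_{[-K_\pi+\ell'_C]}$, and that this inequality ``follows from $-K_\pi\in\cS'$ for rational singularities''. Neither part is correct as stated. First, $Z_K=-K_\pi$ lies in $\cS'$ only on the \emph{minimal} resolution (where all self--intersections are $\leq -2$, so $(Z_K,E_v)=2+E_v^2\leq 0$); on a good embedded resolution of the pair $(X,C)$ one typically has $(-1)$--curves, for which $(Z_K,E_v)=1>0$, so $Z_K\notin\cS'$. Second, the inequality $\ell'\geq s_{[\ell']}$ is not the relevant hypothesis anyway: from \eqref{eq:hell} and \eqref{eq:rhsh} one sees that the obstruction to $\mathfrak{h}(\ell')=\chi(\ell')-\chi(s_{[\ell']})$ is the pair of cohomology groups $h^1(\cO_{\tilde X}(-\ell'))$ and $h^1(\cO_{\tilde X}(-s_{[\ell']}))$, not any ordering of cycles.

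The correct argument (and the one the paper uses) is: $\ell'_C\in\cS'$ because $(\ell'_C,E_v)=-(\widetilde{C},E_v)_{\tilde X}\leq 0$, hence $-(Z_K+\ell'_C)\in -Z_K-\cS'$ and Grauert--Riemenschneider vanishing~\ref{thm:GR} kills $h^1(\cO_{\tilde X}(-Z_K-\ell'_C))$; and since $s_{[Z_K+\ell'_C]}\in\cS'$, Lipman's vanishing~\ref{thm:Lipman} (this is where rationality enters) kills $h^1(\cO_{\tilde X}(-s_{[Z_K+\ell'_C]}))$. Replace your ``$-K_\pi\in\cS'$'' argument and the vague ``Grauert--Riemenschneider/Kawamata--Viehweg-type vanishing'' by these two precise vanishings and your proof goes through, and is then identical to the paper's.
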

\subsection{}
 It is instructive to consider the following table associated with a pair $(X,C)$:

\vspace{3mm}

\begin{center}
\begin{tabular}{|c | c | c |} \hline
  &$\  \mbox{abstract invariants of $C$} \ $ &
  $\ \mbox{embedded invariants of $C\subset X$} \ $ \\ \hline
  $\ \mbox{topological } \ $ & $r$ & $\ \chi(-\ell_C'), \ \chi(s_{[-K_\pi+\ell_C']}), \
  \chi(s_{-[\ell'_C]})\ $ \\ \hline
   $\ \mbox{analytical } \ $ & $\delta(C)$ & $\ \kappa_X(C) \ $ \\ \hline
\end{tabular}
\end{center}

\vspace{3mm}

From above,
for rational $(X,0)$ we have
$\delta(C)=\kappa_X(C)=\chi(-\ell_C')-\chi(s_{-[\ell_C']})$.

\subsection{} Theorems \ref{thm:INTR1} and \ref{thm:INTR2} are immediate consequences of the next theorem, in which some of the statements are valid for non--rational germs too, and we also
emphasize the peculiar properties which should be additionally proved  in the rational case.

\begin{thm}\label{thm:INTR3}
Let $(C,0)$ be a reduced curve germ  on $(X,0)$, $\pi^*C=\widetilde{C}+\ell'_C$ and $h:=[\ell'_C]$.
\begin{enumerate}
 \item 
 If $X$ is a normal surface singularity with \RHS\ link, then
\begin{equation*}
\chi(-\ell'_C)-\chi(s_{-h})=h^1({\tilde X}, \cO_{{\tilde X}}(\ell'_C))-
h^1({\tilde X}, \cO_{{\tilde X}}(-s_{-h}));\end{equation*}
\begin{equation*}
\kappa_X(C)=\chi(-\ell_C')-\chi(s_{[-K_\pi]+h})+h^1(\cO_{\tilde X}(-s_{[-K_\pi]+h})).
\end{equation*}
 \item 
 If $X$ is a normal surface singularity (with a non--necessarily \RHS\ link), then
\begin{equation*}
\delta(C)=h^1({\tilde X}, \cO_{{\tilde X}}(-\widetilde{C}))-p_g(X).
\end{equation*}
 \item 
 If $X$ is a rational singularity, then
\begin{enumerate}
 \item
 $p_g(X)=0$, $h^1({\tilde X}, \cO_{{\tilde X}}(-s_{-h}))=0$,
 $h^1(\cO_{\tilde X}(-s_{[-K_\pi]+h}))=0$,
 \item
 $\chi(s_{-h})= \chi(s_{[Z_K]+h})$,
 \item
 $h^1({\tilde X}, \cO_{{\tilde X}}(\ell'_C))=h^1({\tilde X}, \cO_{{\tilde X}}(-\widetilde{C}))$.
\end{enumerate}
\end{enumerate}
\end{thm}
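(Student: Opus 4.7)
The plan is to tackle the three parts separately, with (2) and (3) reducing to standard exact-sequence calculations combined with rational-singularity vanishing, and (1) carrying the main technical content via the cohomology of natural line bundles. For part (2), I would push forward by $\pi$ the short exact sequence
\[
0 \to \cO_{\tilde X}(-\widetilde C) \to \cO_{\tilde X} \to \cO_{\widetilde C} \to 0.
\]
Normality gives $\pi_*\cO_{\tilde X} = \cO_X$, and since $\widetilde C \to C$ is the normalization one has $\pi_*\cO_{\widetilde C} = \overline{\cO}_C$ and $\pi_*\cO_{\tilde X}(-\widetilde C) = \cI_C$, with $R^1\pi_*\cO_{\widetilde C} = 0$ because the components of $\widetilde C$ meet $E$ transversally at points. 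The resulting long exact sequence at the origin exhibits $\overline{\cO}_{C,0}/\cO_{C,0}$ (of length $\delta(C)$) as the kernel of the surjection $R^1\pi_*\cO_{\tilde X}(-\widetilde C)\twoheadrightarrow R^1\pi_*\cO_{\tilde X}$ (of length $p_g$), which gives (2).

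For part (3), the vanishing $p_g=0$ in (a) is the definition of rationality, while $h^1(\cO_{\tilde X}(-s))=0$ for antinef $s\in\cS'$ follows by a Laufer-style induction on the coefficients of $s$, using the standard rational vanishing for integer antinef cycles together with passage through the universal abelian cover to handle fractional coefficients. For (c), $\cO_{\tilde X}(-\widetilde C)$ has Chern class $\ell'_C$ in $L'$ (since $-\widetilde C\cdot E_v = \ell'_C\cdot E_v$ by definition of $\ell'_C$); rationality forces $H^1(\cO_{\tilde X})=0$, so line bundles on $\tilde X$ are determined by their Chern class, and hence $\cO_{\tilde X}(-\widetilde C)\cong\cO_{\tilde X}(\ell'_C)$, yielding (c). For (b), the duality $\chi(s_{-h}) = \chi(s_{[-K_\pi]+h})$ is a topological/arithmetic statement that I would derive from relative Serre/Grauert--Riemenschneider duality applied to the natural line bundles $\cO_{\tilde X}(-s_h)$, combined with the vanishings from (a) and the symmetry $\chi(x)=\chi(-K_\pi-x)$ inherent in the definition $\chi(x) = -x(x+K_\pi)/2$.

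For part (1), both identities stem from short exact sequences comparing two natural line bundles in the same $H$-class. For the first, set $Z := s_{-h}+\ell'_C\in L_{\geq 0}$ and use
\[
0 \to \cO_{\tilde X}(-s_{-h}) \xrightarrow{s_Z} \cO_{\tilde X}(\ell'_C) \to \cO_Z(\ell'_C|_Z) \to 0,
\]
where $s_Z$ is the canonical section of $\cO_{\tilde X}(Z)$. A direct expansion yields $\chi(\cO_Z(\ell'_C|_Z))=\chi(s_{-h})-\chi(-\ell'_C)$; combined with the long exact cohomology sequence this produces the first identity, provided that the map $\pi_*\cO_{\tilde X}(-s_{-h}) \hookrightarrow \pi_*\cO_{\tilde X}(\ell'_C)$ induced by $s_Z$ is an equality of submodules. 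For the second identity I would use $\kappa_X(C)=\mathfrak{h}(-K_\pi+\ell'_C)$ together with a general formula
\[
\mathfrak{h}(\ell') = \chi(\ell') - \chi(s_{[\ell']}) + h^1(\cO_{\tilde X}(-s_{[\ell']}))
\]
expressing the equivariant Hilbert function via natural line bundle cohomology, and the symmetry $\chi(-K_\pi+\ell'_C) = \chi(-\ell'_C)$.

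The principal obstacle I expect is the equality $\pi_*\cO_{\tilde X}(-s_{-h}) = \pi_*\cO_{\tilde X}(\ell'_C)$ underlying the first identity of (1): both sheaves represent the same Weil-divisor class $h\in \Cl(X)\simeq H$ and are therefore abstractly isomorphic as $\cO_X$-modules, but the required equality of these specific submodules of a common fractional ideal is a canonicality/stability statement about the Lipman antinef closure and requires a direct verification. A secondary difficulty lies in (3)(b), where relative Serre duality must be reconciled with the combinatorial $\chi$ and with the distinction between the rational $K_\pi$ and the genuine canonical $K_{\tilde X}$.
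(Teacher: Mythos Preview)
Your arguments for (2), (3)(a), and (3)(c) are essentially the paper's. The remaining parts have issues worth flagging.

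For the first identity of (1), the paper does not use your one-shot exact sequence with $Z=s_{-h}+\ell'_C$; instead it runs the generalized Laufer computation sequence from $x_0=-\ell'_C$ to $x_t=s_{-h}$. At each step $x_{i+1}=x_i+E_{u_i}$ with $(x_i,E_{u_i})>0$, so $\cO_{E_{u_i}}(-x_i)$ has negative degree on $E_{u_i}\cong\PP^1$, whence $h^0(\cO_{E_{u_i}}(-x_i))=0$ and the quantity $h^1(\cO_{\tilde X}(-x_i))-\chi(x_i)$ is constant along the sequence. This produces the identity directly and, as a byproduct, proves the $H^0$--equality you list as your principal obstacle; so your worry is legitimate for your route but dissolves under the paper's. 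For the second identity of (1), your ``general formula'' is incorrect: the true relation is
\[
\mathfrak h(\ell')=\chi(\ell')-\chi(s_{[\ell']})+h^1(\cO_{\tilde X}(-s_{[\ell']}))-h^1(\cO_{\tilde X}(-\ell')),
\]
with an extra term. At $\ell'=Z_K+\ell'_C$ this last term vanishes by Grauert--Riemenschneider (because $\ell'_C\in\cS'$), a step you must invoke explicitly.

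The genuine gap is (3)(b). Your sketch ``relative Serre/GR duality plus the vanishings from (a) plus $\chi(x)=\chi(-K_\pi-x)$'' does not close the argument. Following the paper, Serre duality on a large $\ell$ together with the formal function theorem and Lipman's vanishing reduce (3)(b) to the \emph{new} vanishing
\[
h^1\big(\cO_{\tilde X}(-Z_K+s_h)\big)=0 \quad \text{for all } h\in H \ \text{and rational } (X,0),
\]
which is isolated as a separate proposition. This does not follow from Lipman (the cycle $Z_K-s_h$ is in general not antinef) nor from ordinary Grauert--Riemenschneider (wrong sign). Its proof requires the generalized Grauert--Riemenschneider theorem with fractional boundary $\Delta=r_{-h}$ to obtain $h^0(\cO_\ell(\ell-r_h))=0$, and then a diagram chase, using $\cF(r_h)=\cF(s_h)$ together with Lipman's vanishing, to upgrade $r_h$ to $s_h$. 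Without this ingredient (3)(b) --- and hence the identity $\kappa_X(C)=\delta(C)$ --- does not follow.
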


The proof uses generalized Laufer computation sequences (from \cite{Laufer-rational,NemOSZ,Nem-GR,Nem-PS}), and vanishing theorems: the Grauert--Riemenschneider
vanishing \cite{GrRie,Laufer-rational,Ram}, its generalization, the local version of the general vanishing from  \cite{EFBook},
both valid for arbitrary surface singularities. Then, we use for rational singularities
Lipman's vanishing \cite{Lipman} (all of them will be reviewed in \ref{ss:vanishingth}).
Additionally we need to prove a new
vanishing theorem, valid for rational surface singularities, namely
$$h^1(\cO_{\tilde X}(K_{\pi}+s_h))=0 \ \ \ \mbox{for any $h\in H$}.$$
(Here $K_{\pi}+s_h$ in general is not an integral cycle, for the definition of the `natural line bundles' $\cO_{\tilde X}(\ell')$ for $\ell'\in L'$ see \ref{s:Pseries}.)

\subsection{}\label{ss:GLOBAL} In the literature there are several articles targeting the generalized
Riemann--Roch Theorem and adjunction formulae for Weil divisors on projective algebraic normal surfaces.
The formulae focus on the correction terms given by the local contributions of the local singular
points of the surface. Here we will follow Blache's approach \cite{Blache-RiemannRoch}, which was also a
motivation for us. (Below we use the standard notations of algebraic geometry.)

\begin{thm}[\cite{Blache-RiemannRoch}]  \label{thm:INTR4}
For every algebraic normal surface germ $(X,0)$ there exists a unique map
$A_{X,0}:{\rm Weil}(X,0)/{\rm Cartier}(X,0) \to \QQ$ with
\begin{enumerate}
\item $A_{X,0}(-D)=A_{X,0}(-K_X+D)$ for any Weil divisor $D$,
\item $A_{X,0}(C)= \chi(-\ell_C')-\delta(C)$ for any reduced curve $(C,0)\subset (X,0)$,
\end{enumerate}
such that for every projective normal surface $Y$ and every Weil divisor $D$ of $Y$ and every reduced curve $C\subset Y$ one has
\begin{equation*}\begin{split}
\chi(\cO_Y(D))=\chi(\cO_Y)+ \frac{1}{2} ( D, D-K_Y)-\sum_{y\in {\rm Sing}(Y)}
A_{Y,y}(-D),\\
C^2+(C,K_Y)=2p_a(C)-2+2 \cdot \sum_{y\in {\rm Sing}(Y)}
A_{Y,y}(C).\end{split}\end{equation*}
\end{thm}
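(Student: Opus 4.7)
The plan is to follow Blache's local-to-global argument: build $A_{X,0}$ locally from a good resolution of each singular germ, verify the two characterizing properties (1)--(2), and deduce both global formulas by comparing Riemann--Roch on a simultaneous smooth resolution $\tilde Y$ with the Euler characteristic on $Y$ via the Leray spectral sequence. The correction then isolates as a sum of purely local singular contributions.

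At each singularity I fix a good resolution $\pi : \tilde X \to X$ with total transform $\pi^* D = \widetilde D + \ell'_D$, and set
$$A_{X,0}(D) := p_g(X,0) - h^1\!\big(\tilde X,\, \cO_{\tilde X}(-\pi^* D)\big),$$
interpreting $\cO_{\tilde X}(-\pi^* D)$ through the natural line bundle formalism of Section \ref{s:Pseries}. If $D$ is Cartier then $\pi^* D$ is principal, the bundle is trivial, and $A_{X,0}(D) = 0$, so the expression descends to $\mathrm{Weil}/\mathrm{Cartier}$. Property (1) follows from local Serre duality on $\tilde X$ swapping $\pi^* D$ with $K_{\tilde X} - \pi^* D$, combined with the fact that the exceptional difference $K_{\tilde X} - \pi^* K_X$ is absorbed in the natural bundle twist. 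Property (2) follows from Theorem \ref{thm:INTR3}(2) together with a standard exact sequence on $\tilde X$ relating $\cO_{\tilde X}(-\widetilde C - \ell'_C)$ to $\cO_{\tilde X}(-\widetilde C)$, whose alternating Euler characteristics produce $\chi(-\ell'_C)$ and, after invoking $\delta(C) = h^1(\cO_{\tilde X}(-\widetilde C)) - p_g(X)$, yield $A_{X,0}(C) = \chi(-\ell'_C) - \delta(C)$.

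For the global formulas I take a simultaneous resolution $\pi : \tilde Y \to Y$. Classical Riemann--Roch on the smooth $\tilde Y$, combined with the identity $(\pi^* D, \pi^* D - K_{\tilde Y})_{\tilde Y} = (D, D - K_Y)_Y$ (from the numerical triviality of $\pi^* D$ on exceptional curves and the exceptionality of $K_{\tilde Y} - \pi^* K_Y$), gives
$$\chi(\cO_{\tilde Y}(\pi^* D)) = \chi(\cO_{\tilde Y}) + \tfrac{1}{2}(D, D - K_Y)_Y.$$
The Leray spectral sequence, with $\pi_* \cO_{\tilde Y}(\pi^* D) = \cO_Y(D)$, expresses the discrepancy $\chi(\cO_Y(D)) - \chi(\cO_{\tilde Y}(\pi^* D))$ as $\sum_y h^1(\tilde X_y, \cO_{\tilde X_y}(\pi^* D))$; specializing to $D = 0$ gives $\chi(\cO_Y) - \chi(\cO_{\tilde Y}) = \sum_y p_g$, and subtraction recovers the first global identity with exactly the local invariant $A_{Y,y}(-D)$ defined above. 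The adjunction identity then follows by specializing to $D = C$ reduced, comparing with the classical adjunction on the smooth locus, and using property (1) to handle the canonical twist.

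Uniqueness is forced by property (2) determining $A_{X,0}$ on every reduced curve germ, together with the surjectivity of $C \mapsto [\ell'_C]$ from reduced curves onto $\mathrm{Weil}(X,0)/\mathrm{Cartier}(X,0)$ (every class admits a reduced representative), so (1) serves as a consistency check rather than new data. The main obstacle I anticipate is the bookkeeping around the fractional total transform $\pi^* D$: the identification $\pi_* \cO_{\tilde Y}(\pi^* D) = \cO_Y(D)$ and the reconciliation of Mumford's $\QQ$-valued intersection pairing with Riemann--Roch on the integer natural-bundle model both require careful case work inside the formalism of Section \ref{s:Pseries}.
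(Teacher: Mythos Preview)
The paper does not give a proof of Theorem~\ref{thm:INTR4}: it is quoted from Blache~\cite{Blache-RiemannRoch} and used as input for the Corollary that follows. There is therefore no in-paper argument to compare your proposal against.

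On its own merits, your local-to-global strategy (Riemann--Roch on a smooth resolution plus Leray to isolate the singular contributions) is the correct architecture, but the local definition you propose does not work as stated. The natural line bundle $\cO_{\tilde X}(\ell')$ of Section~\ref{s:Pseries} is constructed through the universal abelian cover and presupposes that the link is a rational homology sphere; Theorem~\ref{thm:INTR4} is asserted for an arbitrary algebraic normal surface germ, so your $A_{X,0}(D):=p_g-h^1(\cO_{\tilde X}(-\pi^*D))$ is not even defined in the stated generality. Even under the \RHS\ hypothesis, your verification of property~(2) appeals to a short exact sequence with quotient supported on $\ell'_C$, but when $\ell'_C$ is non-integral there is no structure sheaf $\cO_{\ell'_C}$ and the two bundles $\cO_{\tilde X}(-\widetilde C)$ and $\cO_{\tilde X}(-\widetilde C-\ell'_C)$ are not related by such a sequence; the computation you sketch only goes through in the Cartier case $\ell'_C\in L$. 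Blache's actual construction avoids both issues by working with the reflexive sheaf $\cO_X(D)$ and integral roundings of $\pi^*D$ rather than the abelian-cover machinery. Your anticipated ``bookkeeping'' obstacle around the fractional total transform is therefore not a detail but the heart of the matter.
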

This combined with our main result gives the following.
\begin{cor} If $(X,0)$ is rational then $A_{X,0}(C)= \chi(s_{[-K_\pi+\ell'_C]})$. This is an embedded topological characterization of  Blache's correction term $A_{X,0}$.
\end{cor}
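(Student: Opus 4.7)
The plan is to read this corollary as an immediate combination of Theorem \ref{thm:INTR4}(2) with Theorem \ref{thm:INTR1}. Property (2) of Blache's characterization, applied to the reduced curve germ $(C,0) \subset (X,0)$, yields
\begin{equation*}
A_{X,0}(C) = \chi(-\ell'_C) - \delta(C).
\end{equation*}
On the other hand, the hypothesis that $(X,0)$ is rational brings in Theorem \ref{thm:INTR1}, which provides
\begin{equation*}
\delta(C) = \chi(-\ell'_C) - \chi(s_{[-K_\pi + \ell'_C]}).
\end{equation*}
Substituting the second identity into the first, the terms $\chi(-\ell'_C)$ cancel and we are left with the asserted formula $A_{X,0}(C) = \chi(s_{[-K_\pi + \ell'_C]})$.

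To justify the second sentence of the corollary, I would verify that every ingredient on the right hand side is determined by the embedded topological type of $C \subset X$. Fix a good embedded resolution $\pi$: its dual graph (self--intersections and genera of the $E_v$'s together with the attaching data of the strict transform) determines the lattice $L$, its intersection form, and hence $L'$, the canonical cycle $K_\pi$, and the Riemann--Roch function $\chi$. The very same graph also records the intersection multiplicities $(\widetilde C, E_v)_{\tilde X}$, from which $\ell'_C \in L'$ is produced by solving the linear system $(\ell'_C + \widetilde C, E_v) = 0$. Consequently the class $h := [-K_\pi + \ell'_C] \in H$ is topological, and so is its minimal antinef lift $s_h \in \cS'$, whose construction from $h$ is purely combinatorial in terms of the Lipman cone.

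I do not expect any genuine obstacle at this stage: the conceptual work has been absorbed into Theorem \ref{thm:INTR1}, and the corollary only records the cleanest arithmetic rearrangement of it. The one point worth being careful about is that Blache's property (2) refers to reduced curves (for which $\ell'_C$ and $\delta(C)$ are defined in the standard way), so the identity is stated, and should be read, in that category; the uniqueness clause of Theorem \ref{thm:INTR4} then guarantees that this formula pins down $A_{X,0}$ on the class of $C$ in $\mathrm{Weil}(X,0)/\mathrm{Cartier}(X,0)$.
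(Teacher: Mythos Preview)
Your argument is correct and matches the paper's approach exactly: the corollary is presented there as an immediate consequence of combining Blache's identity $A_{X,0}(C)=\chi(-\ell'_C)-\delta(C)$ from Theorem~\ref{thm:INTR4}(2) with the formula $\delta(C)=\chi(-\ell'_C)-\chi(s_{[-K_\pi+\ell'_C]})$ of Theorem~\ref{thm:INTR1}. Your additional justification that the right-hand side is determined by the embedded resolution graph is a welcome elaboration of what the paper leaves implicit.
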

Note also that the identity (\ref{eq:INTR1}) is in a perfect compatibility with
Theorem \ref{thm:INTR4}{\it (1)}.
\subsection{}

The paper is organized as follows: in section~\ref{s:GenAn} a brief description of the basic tools to study \RHS\ surface singularities is given. In section~\ref{sec:kappa-delta},  after some motivating examples,
the kappa invariant is defined. In section~\ref{sec:mainthm}
we prove the main result, Theorem \ref{thm:INTR3}. In the body of the article we list several examples. In section~\ref{s:delta} further examples and applications are given. For instance
we exhibit the limits of our identities in the non--rational cases, we exemplify
rational Kulikov singularities, and we also compare (via Blache's invariant)
 Mumford and Hironaka's intersection multiplicities, valid for  curve germs.

\subsection*{Acknowledgments}
The first and third authors want to thank the Fulbright Program (within the Jos\'e Castillejo and Salvador de Madariaga
grants by Ministerio de Educaci\'on, Cultura y Deporte) for their financial support while writing this paper. They also want
to thank the University of Illinois at Chicago, especially Anatoly Libgober, Lawrence Ein, and Kevin Tucker for their warm welcome
 and support in hosting them as well as their useful discussions.

\section{Preliminaries}\label{s:GenAn}

\subsection{Lattices associated with a resolution}
\label{sec:lattice}
Let us consider a complex normal surface singularity $(X,0)$. Let $\pi:\tilde X\to X$ be a good resolution with dual
resolution graph $\Gamma$ whose set of vertices are denoted by $\V$. Let $\{E_v\}_{v\in \V}$ be the irreducible components
of the exceptional set $\pi^{-1}(0)$. We assume that the link $\Sigma$ is a rational homology sphere, i.e.
$\Gamma$ is a connected tree and all $E_v$ are rational. (For more regarding this section see~\cite{NemOSZ,Nem-PS,Nem-CLB,Okuma-abelian}.)

Define the lattice $L$ as  $H_2(\tilde X,\ZZ)$, it is generated by the exceptional divisors $E_v$, $v\in \V$, that is,
$L=\oplus_{v\in \V} \ZZ\langle E_v \rangle$. In the homology exact sequence of the pair $(\tilde X, \Sigma)$ ($\partial \tilde X=\Sigma$) one has
$H_2(\Sigma,\ZZ)=0$, $H_1(\tilde X, \ZZ)=0$,  hence the exact sequence  has the form:
\begin{equation}
\label{eq:ses}
\array{ccccccccc}
0 & \to & L & \to & H_2(\tilde X,\Sigma,\ZZ) & \to & H_1(\Sigma,\ZZ) & \to & 0.\\
\endarray
\end{equation}
Set  $L':= \Hom(H_2(\tilde X,\ZZ),\ZZ)$.
The Lefschetz--Poincar\'e duality $H_2(\tilde X,\Sigma,\ZZ)\cong H^2(\tilde X,\ZZ)$
defines a perfect  pairing
$L\otimes H_2(\tilde X,\Sigma,\ZZ)\to \ZZ$.  Hence $L'$
can be identified with $H_2(\tilde{X}, \Sigma, \ZZ)$. By~\eqref{eq:ses}
$L'/L\cong H_1(\Sigma,\ZZ)$, which will be denoted by $H$.
Since $H$ is finite, one has the embedding $L'\subset L_{{\mathbb Q}}:=
 L\otimes {\mathbb Q}$ too,
and $L'$ identifies with the rational cycles $\{\ell'\in L_{{\mathbb Q}}\,:\, (\ell',L)_{{\mathbb Q}}\in \ZZ\}$, where
$(\,,\,)$ denotes the intersection form on $L$  and $(\,,\,)_{{\mathbb Q}}$  its extension to $L_{{\mathbb Q}}$.
 Hence, in the sequel we regard $L'$ as $\oplus_{v\in \V} \ZZ\langle E^*_v \rangle$,
 the lattice generated by the rational cycles  $E^*_v\in L_{{\QQ}}$,
$v\in \V$,  where $(E_u^*,E_v)_{{\QQ}}=-\delta_{u,v}$ (Kronecker delta) for any $u,v\in \V$.
 The inclusion $L\subset L'$  in the bases $\{E_v\}_v$ and $\{E_v^*\}_v$  is given by
$-M$, where $M$ is the intersection matrix of $L$, that is,
$E_v=-\sum_{u\in \V} (E_v,E_u) E_u^*$.

Since $M$ is negative definite the matrix  $-M^{-1}$
has  positive  entries, and the   $E^*_v$'s   are the columns of  $-M^{-1}$.
The elements $E^*_v$ have the following
geometrical interpretation as well: consider $\gamma_v\subset \tilde X$ a curvette associated with $E_v$, that is, a smooth
irreducible curve in $\tilde X$ intersecting $E_v$ transversally. Then
$\pi^*\pi_*\gamma_v=\gamma_v+E_v^*$.

Let $K_{\tilde X}$ be the canonical divisor of  the smooth surface $\tilde X$. The canonical divisor in $X$ is defined as
$K_X:=\pi_*K_{\tilde X}$. In particular,  $K_\pi:=K_{\tilde X}-\pi^*K_X$ is a rational cycle supported on the exceptional
set $\pi^{-1}(0)$; it is called the  {\it canonical cycle}
 of $\pi$, and it is determined topologically/numerically  by the
linear system of {\it adjunction relations}
\begin{equation}
\label{eq:KX}
(K_\pi+E_v,E_v)+2=0, \textrm{ for all } v\in \V.
\end{equation}
In particular, $K_\pi\in L'$.
Sometimes it is more convenient to use the (anti)canonical cycle $\ZK:=-K_\pi$.
Using~\eqref{eq:KX}, $Z_K$ can be written as
\begin{equation}\label{eq:ZK}
\ZK=E-\sum_{v\in \V} (2-\text{val}(v)) E^*_v.
\end{equation}
where $E=\sum_{v\in \V}E_v$ and $\text{val}(v)$ is the valency of $v$ in $\Gamma$.

For any $\ell'\in L'$ we write $\chi(\ell'):=-(\ell', \ell'-Z_K)/2$. By the Riemann--Roch theorem, for any
effective $\ell\in L_{>0}$ one has $\chi(\ell)=h^0({\mathcal O}_\ell)-h^1({\mathcal O}_\ell)$.

\subsection{\texorpdfstring{$H$}{H}--representatives and the Lipman cone}\label{ss:HrepLip}
For $\ell'_1,\ell'_2\in L_\QQ$ with $\ell'_i=\sum_v l'_{iv}E_v$ ($i=\{1,2\}$) one considers a partial  order
relation $\ell'_1\geq \ell'_2$ defined coordinatewise by $l'_{1v}\geq l'_{2v}$ for all $v\in\V$.
In particular, $\ell'$ is an effective rational cycle if $\ell'\geq 0$, written as $\ell'\in L'_{\geq 0}$.
We set also $\min\{\ell'_1,\ell'_2\}:= \sum_v\min\{l'_{1v},l'_{2v}\}E_v$.

Given an element $\ell'\in L'$ we denote by $[\ell']\in H$ its class in $H=L'/L$.
The lattice $L'$ admits a partition parametrized by the group $H$,  where for any $h\in H$ one sets
\begin{equation}
\label{eq:Lprime}
L'_h=\{\ell'\in L'\mid [\ell']=h\}\subset L'.
\end{equation}
Note that $L'_0=L$.
Given an $h\in H$ one can define $r_h:=\sum_v l'_v E_v\in L'_h$ as
the unique  element of $L_h'$ such that $0\leq l'_v<1$.
Equivalently,  $r_h=\sum_v \{l'_v\} E_v$ for any $\ell'=\sum_v l'_v E_v\in L'_h$, where $0\leq \{\cdot\}<1$
represents the fractional part.

We define the rational  Lipman cone by
$$\cS_\QQ:=\{\ell'\in L_\QQ \ | \ (\ell',E_v)\leq 0 \ \mbox{for all} \ v\in \V\},$$
which is a cone generated over $\QQ_{\geq 0}$ by $E^*_v$.
Define $\cS':=\cS_\QQ\cap L'$ as the semigroup (monoid) of anti--nef rational cycles of $L'$;  it
 is generated over $\mathbb{Z}_{\geq 0}$ by the cycles $E^*_v$.

In particular, if $(C,0)\subset (X,0)$ is a reduced curve (or only a nonzero effective Weil
divisor), and we write $\pi^*(C)=\widetilde{C}+\ell_C'$ (as in the introduction) with
$\ell_C'\in L'$, then necessarily $\ell_C'\in \cS'\setminus \{0\}$, in particular, $\ell_C'$
 is nonzero effective.

The Lipman cone $\cS'$  also admits a natural equivariant partition indexed  by $H$ by $\cS'_{h}=\cS'\cap L'_h$.
Note the following properties of the Lipman cone:
\begin{enumerate}
\item[(a)] $s_1,s_2\in \cS'_{h}$ implies $s_2-s_1\in L$ and $\min\{s_1,s_2\}\in \cS'_h$,
\item[(b)]\label{prop:sh}
for any $s\in L_{{\QQ}}$, the set $\{s'\in \cS'_h \mid s'\not\geq  s\}$ is finite, since $-M^{-1}$
has positive  entries.
\item[(c)]  for any $h$ there exists a unique  \emph{minimal cycle} $s_h:=\min \{\cS'_{h}\}$ (see~\ref{ss:GLA} below).
\item[(d)] $\cS'_h$ is a cone with \emph{vertex} $s_h$ in the sense that $\cS'_h=s_h+\cS'_0$.
\end{enumerate}
Following~\cite{NemOSZ} in the next subsection we  describe a generalization of Laufer's algorithm (see~\cite{Laufer-rational})
that can be used to calculate $s_h$.
\subsubsection{{\bf Generalized Laufer's algorithm}}\label{ss:GLA}
\cite[Lemma 7.4]{NemOSZ} For any $\ell'\in L'$ there exists a unique
minimal element $s(\ell')$  of the set $\{s\in \cS' \,:\, s-\ell'\in L_{\geq 0}\}$. It  can be obtained by the following algorithm.
Set $x_0:=\ell'$. Then one constructs  a computation sequence $\{x_i\}_i$ as follows.
If $x_i$ is already constructed and $x_i\not\in\cS'$ then there exists some $E_{u_i}$ such that $(x_i,E_{u_i})>0$.
Then take $x_{i+1}:=x_i+E_{u_i}$ (for some choice of $E_{u_i}$). Then the procedure after finitely many steps stops,
say at $x_t$, and necessarily $x_t=s(\ell')$.

Note that $s(r_h)=s_h$ and $r_h\leq s_h$, however, in general $r_h\neq s_h$.
(This fact does not contradict the minimality of $s_h$ in $\cS'_h$ since $r_h$ might not sit in $\cS'_h$.)
Also, if $\ell'\in L'_{\leq 0}$, then $s(\ell')=s_{[\ell']}$.

\subsection{Local divisor class group}\label{ss:LDCG} Using the exponential exact sequence of $\tilde X$ (and the notation $H^1(\tilde X, \cO^*_{{\tilde X}})=
{\rm Pic}(\tilde X)$ and,  as usual,  $L'=H^2(\tilde X,\ZZ)\simeq H_2(\tilde X,\Sigma,\ZZ)$) we get
\begin{equation}\label{eq:picard} 0\to H^1(\tilde X, \cO_{{\tilde X}})\to {\rm Pic}(\tilde  X)\stackrel{c_1}
{\longrightarrow} L'\to 0 \ \ \ \ \mbox{($c_1$=first Chern class)}.\end{equation}
Note that $L$ embeds naturally both in $L'$ and in ${\rm Pic}(\tilde  X)$ (in the second one by $\ell \mapsto \cO_{{\tilde X}}(\ell)$).
The group ${\rm Pic}(\tilde  X)/L$ is the {\it local divisor class group of }~$X$, that is, the group of local Weil divisors modulo the local Cartier divisors.
In particular, we have (the resolution independent) exact sequence
\begin{equation}\label{eq:picard2}
0\to H^1(\tilde X, \cO_{{\tilde X}})\to {\rm Weil}(X)/ {\rm Cartier}(X)\to H_1(\Sigma, \ZZ)\to 0.
\end{equation}
Recall that $p_g(X)=h^1(\tilde X, \cO_{{\tilde X}})$ is the {\it geometric genus}~of the germ
$(X,0)$.  The germ $(X,0)$ is called
{\it rational}~if $p_g(X)=0$.  By the above exact sequences, for rational singularities one has ${\rm Pic}(\tilde X)=L'$ (that is,
any line bundle of $\tilde X$ is determined topologically by its first Chern class) and also, the local divisor class group is
isomorphic with $H=H_1(\Sigma, \ZZ)$. (The morphism is induced as follows: take a divisor $D$, then the homology class of its oriented boundary
$\partial D\subset \partial  \tilde X=\Sigma$  gives the correspondence. However, here a warning is appropriate:
if $C$ is a reduced Weil divisor germ in $X$, and we set $\pi^*C=\widetilde{C}+\ell'_C$, where $\widetilde{C}$ is the
strict transform and $\ell'_C\in L'$, then usually in this manuscript we set $h=[\ell'_C]$. Hence, since
$\widetilde{C}+\ell'_C=0$ in $H_2(\tilde X, \Sigma,\ZZ)$, the class of $\partial \widetilde{C}$ is $-h$.)

\subsection{The Hilbert series of surface singularities}\label{s:Pseries}
\label{ss:uac}
We fix a good resolution $\pi$ of $(X,0)$.
 Consider $c:(Y,0)\to (X,0)$, the universal abelian cover of $(X,0)$, let
$\tilde Y$ be the normalized pull--back of $\pi$ and $c$, and denote by $\pi_Y$ and $\widetilde{c}$
the induced maps by the pull--back completing the following commutative diagram.
\begin{equation}
\label{eq:diagram}
\xymatrix{
\ar @{} [dr] | {\ }
\tilde{Y} \ar[r]^{\widetilde{c}} \ar[d]_{\pi_Y} & \tilde{X} \ar[d]^{\pi} \\
Y \ar[r]_{c} & X
}
\end{equation}

We define the following $H$--equivariant $L'$--indexed divisorial filtration on the local ring $\mathcal{O}_{Y,0}$
for any given $\ell'\in L'$:
\begin{equation}
\label{eq:Ffiltration}
\mathcal{F}(\ell'):=\{g\in \cO_{Y,0} \mid \div (g\circ \pi_Y)\geq \tilde c^* (\ell')\},
\end{equation}
where the pull--back $\tilde c^*(\ell')$ is an integral cycle in $\widetilde Y$ for any $\ell'\in L'$,
cf.~\cite[Lemma 3.3]{Nem-CLB}. The natural action of $H$ on $(Y,0)$ induces an action on $\cO_{Y,0}$ as follows:
$h\cdot g(y)=g(h\cdot y)$, $g\in \cO_{Y,0}$, $h\in H$. This action decomposes
$\cO_{Y,0}$ as $\oplus_{\lambda\in \hat{H}} (\cO_{Y,0})_{\lambda}$ according to the characters
$\lambda \in \hat{H}:={\rm Hom}(H,\CC^*)$, where
\begin{equation}
\label{eq:Heigenspaces}
(\cO_{Y,0})_{\lambda}:=\{g\in \cO_{Y,0} \mid g(h\cdot y)=\lambda (h)g(y),\ \forall y\in Y, h\in H\}.
\end{equation}
Note that there exists a natural isomorphism $\theta:H\to \hat{H}$ given by
$h\mapsto \exp(2\pi \sqrt{-1} (\ell',\cdot ))\in \Hom(H,\CC^*)$, where  $\ell' $ is any element of $L'$ with
$h=[\ell']$. In order to simplify our notations we write
 $(\cO_{Y,0})_h$ for $(\cO_{Y,0})_{\theta(h)}$ (and similarly for any
linear $H$--representation).

The subspace  $\mathcal{F}(\ell')$  is invariant under this
action and $ \mathcal{F}(\ell')_h=\mathcal{F}(\ell')\cap (\cO_{Y,0})_h$. Thus, one can define the {\em Hilbert function}
 $\mathfrak{h}(\ell')$ for any $\ell'\in L'$
as the dimension of the $\theta([\ell'])$--eigenspace $(\cO_{Y,0}/\mathcal{F}(\ell'))_{[\ell']}$.
The corresponding multivariable  \emph{Hilbert series} is
\begin{equation}
\label{eq:Hilbert}
H(\mathbf{t})=\sum_{\ell'\in L'} \mathfrak{h}(\ell')\mathbf{t}^{\ell'}\in \mathbb{Z}[[L']],
\end{equation}
where $\mathbb{Z}[[L']]$ is the $\ZZ$--module of formal series in the monomials
$\mathbf{t}^{\ell'}:=\prod_{v\in \V}t_v^{l'_v}$ for any $\ell'=\sum_{v}l'_v E_v$, where each
$l'_v$ might be rational from  $\frac{1}{d}\ZZ$ with~$d=|H|$.

The $H$--eigenspace decomposition of $\tilde c_{*}(\cO_{\widetilde Y})$ is given by (see~\cite{Nem-PS,Okuma-abelian})
$$\tilde c_{*}(\cO_{\widetilde Y})=\bigoplus_{h\in H}\cO_{\widetilde X}(-r_h) \   \ \mbox{with} \
\cO_{\widetilde X}(-r_h)=(\tilde c_{*}(\cO_{\widetilde Y}))_h,$$
where $\cO_{\widetilde X}(\ell')$ is the unique  line bundle $\mathcal L$ on $\widetilde X$ satisfying
$\tilde c^*\mathcal L=\cO_{\widetilde Y}(\tilde c^*(\ell'))$ (see~\cite[3.5]{Nem-CLB}) and $r_h$ is the
representative of $h$ as in section~\ref{ss:HrepLip}.
As a word of caution, note that $r_h$ is a $\QQ$--divisor in $\widetilde X$, however the notation
$\cO_{\widetilde{X}}(-r_h)$ here is different from the one used  by Sakai in~\cite{Sakai84}.
In particular, $\cO_{\widetilde{X}}(-r_h)$ and $\cO_{\widetilde{X}}(\lfloor -r_h \rfloor)$ denote different objects.

This provides the following alternative expression of the Hilbert function (cf.~\cite[Corollary.~4.2.4]{Nem-PS})
\begin{equation}\label{eq:hdim}
\mathfrak{h}(\ell')=\dim \frac{H^0(\widetilde X, \cO_{\widetilde X}(-r_h))}{H^0(\widetilde X, \cO_{\widetilde X}(-r_h-\ell))}
\end{equation}
for any $\ell'=\ell+r_h>0$.

\subsection{{\bf Some useful exact sequences and vanishing theorems}}\label{ss:vanishingth}
For  $\ell\in L_{>0}$ and $\ell'_1\in L'$ consider the
cohomology exact sequence associated with the exact sequence
\begin{equation}
\label{eq:restriction2}
0\to \cO_{\widetilde X}(-\ell-\ell'_1)\to \cO_{\widetilde X}(-\ell'_1)\to \cO_{\ell}(-\ell'_1)\to 0.
\end{equation}
Applying~\eqref{eq:restriction2} to $\ell'_1=r_h$ and writing $\ell'=\ell+r_h$ one obtains
$$\mathfrak{h}(\ell') -\chi(\cO_{\ell}(-r_h))+h^1(\cO_{{\tilde X}}(-\ell'))-h^1(\cO_{{\tilde X}}(-r_h))=0$$
or equivalently,
\begin{equation}\label{eq:hell}
\mathfrak{h}(\ell') =\chi(\ell')-h^1(\cO_{{\tilde X}}(-\ell'))-\chi(r_h)+ h^1(\cO_{{\tilde X}}(-r_h)).
\end{equation}
Similarly, in the  generalized Laufer's algorithm, when  $x_{i+1}=x_i+E_{u_i}$
and  $(x_i,E_{u_i})>0$ (see~\ref{ss:GLA}), the choice $\ell_1'=x_i$ and $\ell=E_{u_i}$ in~\eqref{eq:restriction2}   applied  repeatedly gives
\begin{equation}\label{eq:sell'}
h^1(\cO_{{\tilde X}}(-\ell'))-\chi(\ell')=h^1(\cO_{{\tilde X}}(-s(\ell')))-\chi(s(\ell')).
\end{equation}
Note that, if $\ell'=r_h$ then $s(r_h)=s_h$, hence
\begin{equation}\label{eq:rhsh}
h^1(\cO_{{\tilde X}}(-r_h))-\chi(r_h)=h^1(\cO_{{\tilde X}}(-s_h))-\chi(s_h).
\end{equation}

Regarding the cohomology group $h^1({\tilde X}, {\mathcal L})$ there are several
 useful vanishing theorems.

\begin{thm}[\textbf{Grauert--Riemenschneider vanishing~\cite{GrRie,Laufer-rational,Ram}}]
\label{thm:GR}
For any $(X,0)$ (even if $\Sigma$ is not a \RHS) and
 any $\mathcal L\in {\rm Pic}(\tilde X)$ with
$-c_1({\mathcal L})\in Z_K+ \cS'$ one has $h^1({\tilde X},{\mathcal L})=0$.
\end{thm}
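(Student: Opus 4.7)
The plan is to reduce the statement to a relative vanishing via the Leray spectral sequence. Since $(X,0)$ is a Stein germ, $H^i(X,\mathcal F) = 0$ for $i>0$ and any coherent $\mathcal F$ on $X$, so once one proves $R^1\pi_*\mathcal L = 0$, the Leray sequence immediately yields $H^1(\tilde X,\mathcal L) = H^0(X, R^1\pi_*\mathcal L) = 0$. The whole problem therefore reduces to the vanishing of this first higher direct image on the exceptional locus.

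First I would translate the hypothesis into a $\pi$-nefness statement. Writing $-c_1(\mathcal L) = Z_K + Z$ with $Z \in \cS'$, and using $Z_K = -K_\pi = -K_{\tilde X} + \pi^*K_X$ together with $(\pi^*K_X, E_v) = 0$, one computes
\begin{equation*}
(c_1(\mathcal L) - K_{\tilde X}, E_v) \;=\; -(Z, E_v) \;\geq\; 0 \qquad \text{for every } v \in \V,
\end{equation*}
so that $c_1(\mathcal L \otimes \omega_{\tilde X}^{-1})$ is $\pi$-nef. With $\pi$ projective and $\tilde X$ smooth, the desired $R^1\pi_*\mathcal L = 0$ then follows from a relative Kawamata--Viehweg / Grauert--Riemenschneider type vanishing, and the Stein property of $X$ concludes the proof as above.

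A hands-on, Laufer-style alternative that works uniformly is to build an effective exceptional computation sequence $0 = Z_0 < Z_1 < \cdots < Z_N$ with $Z_{i+1} = Z_i + E_{v_i}$, and exploit the restriction sequences
\begin{equation*}
0 \;\to\; \mathcal L(-Z_{i+1}) \;\to\; \mathcal L(-Z_i) \;\to\; \mathcal L(-Z_i)\big|_{E_{v_i}} \;\to\; 0,
\end{equation*}
where each $E_{v_i}\cong\PP^1$. If the sequence is arranged so that $\deg\big(\mathcal L(-Z_i)|_{E_{v_i}}\big)\geq -1$ at every step (which can be extracted from the numerical hypothesis on $c_1(\mathcal L) - K_{\tilde X}$), then $h^1$ of each restriction vanishes and the vanishing propagates up the sequence; Grothendieck's theorem on formal functions then promotes this to $R^1\pi_*\mathcal L = 0$. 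The main delicacy to keep in mind is that when $\Sigma$ is not a rational homology sphere (and in particular $H^1(\tilde X,\cO_{\tilde X}) \neq 0$), a line bundle is not determined by its first Chern class alone, so the argument must be run for an arbitrary $\mathcal L$ with the given $c_1$ rather than for a distinguished representative such as $\omega_{\tilde X} \otimes \pi^*\mathcal N$. Laufer's combinatorial procedure handles precisely this point, which is why the theorem holds in the full generality stated.
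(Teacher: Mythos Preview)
The paper does not give its own proof of this statement; it quotes Grauert--Riemenschneider vanishing as a known result with references to \cite{GrRie,Laufer-rational,Ram}. So there is no paper proof to compare against, and your task was really to reconstruct a proof from the literature.

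Your first argument is correct and is essentially the standard one. The reduction $H^1(\tilde X,\mathcal L)\cong H^0(X,R^1\pi_*\mathcal L)$ via Leray and Stein-ness of $X$ is valid (the higher direct image is a skyscraper at $0$, so this is even an equivalence). The numerical translation is also right: with $-c_1(\mathcal L)=Z_K+Z$ and $Z\in\cS'$, one has $(c_1(\mathcal L)-K_{\tilde X},E_v)=-(Z,E_v)\geq 0$, so $\mathcal L\otimes\omega_{\tilde X}^{-1}$ is $\pi$-nef, and then the relative Kawamata--Viehweg/Grauert--Riemenschneider theorem gives $R^1\pi_*\mathcal L=0$.

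Your alternative Laufer-style argument has a small gap in the generality claimed. You write ``each $E_{v_i}\cong\PP^1$'', but in the non-\RHS\ case the exceptional components may have positive genus, so the vanishing criterion $\deg\geq -1$ on $\PP^1$ is not available. The fix is straightforward: one needs $\deg\big(\mathcal L(-Z_i)|_{E_{v_i}}\big)\geq 2g_{v_i}-1$ at each step, and the adjunction relation $(K_{\tilde X}+E_v,E_v)=2g_v-2$ together with the $\pi$-nefness of $c_1(\mathcal L)-K_{\tilde X}$ lets one arrange this along a suitable computation sequence, exactly as in Laufer's original paper. The ``delicacy'' you flag (that $\mathcal L$ is not determined by $c_1$) is real but already handled, since the argument only uses degrees of restrictions; the genus issue is the one you missed.
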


\begin{thm}[\textbf{Generalized Grauert--Riemenschneider vanishing~\cite{EFBook}}]
\label{thm:GGR}  For any $(X,0)$ (even if $\Sigma$ is not a \RHS),
 for any $\mathcal L\in {\rm Pic}(\tilde X)$ and for any $\Delta \in L_{\QQ}$ with
 $\lfloor \Delta \rfloor=0$, if
$-c_1({\mathcal L})\in -\Delta + Z_K+ \cS_{\QQ}$, then $h^1({\tilde X},{\mathcal L})=0$.
\end{thm}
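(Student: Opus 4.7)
The plan is to reduce this generalized vanishing to the classical Grauert--Riemenschneider vanishing of Theorem~\ref{thm:GR} via a Kawamata-type cyclic covering that eliminates the fractional correction $\Delta$; this is the local incarnation of the Kawamata--Viehweg vanishing on the smooth surface $\tilde X$.

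First I would reinterpret the hypothesis intersection-theoretically on $\tilde X$. Since $K_{\tilde X}=K_\pi+\pi^*K_X=-Z_K+\pi^*K_X$ and $(\pi^*K_X,E_v)=0$ for every exceptional curve $E_v$, the condition $-c_1(\mathcal L)\in -\Delta+Z_K+\cS_{\QQ}$ is equivalent to
$$(c_1(\mathcal L)-K_{\tilde X}-\Delta,E_v)\geq 0 \text{ for all } v\in\V.$$
Setting $\mathcal M:=\mathcal L\otimes \omega_{\tilde X}^{-1}$, the rational class $c_1(\mathcal M)-\Delta$ is $\pi$-nef, and the assertion becomes the standard Kawamata--Viehweg statement $h^1(\tilde X,\omega_{\tilde X}\otimes\mathcal M)=0$.

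Next I would apply the covering trick. Choose $N\in\mathbb{Z}_{>0}$ with $N\Delta\in L$; the good-resolution assumption on $\pi$ guarantees that $\mathrm{Supp}(\Delta)\subset E$ has simple normal crossings, so the Kawamata construction yields a finite cyclic cover $p\colon Y\to \tilde X$, followed if necessary by a log resolution $q\colon Y'\to Y$, together with a line bundle on $Y'$ whose $N$-th power is $\cO_{Y'}(-N\tilde p^*\Delta)$, where $\tilde p:=p\circ q$. On $Y'$ the pullback of $c_1(\mathcal M)-\Delta$ becomes an integral $(\pi\circ\tilde p)$-nef Cartier class, so Theorem~\ref{thm:GR} applied to the composite resolution $\pi\circ\tilde p\colon Y'\to X$ gives the vanishing of $h^1$ of the corresponding line bundle on $Y'$. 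The Esnault--Viehweg decomposition of $\tilde p_*\omega_{Y'}$ into $\omega_{\tilde X}$-twisted summands indexed by $0\leq i<N$ then singles out, thanks to $\lfloor\Delta\rfloor=0$, the sheaf $\omega_{\tilde X}\otimes \mathcal M$ as a direct summand; since $\tilde p$ is finite, the vanishing on $Y'$ descends to the desired $h^1(\tilde X,\omega_{\tilde X}\otimes\mathcal M)=0$.

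The main obstacle is the bookkeeping in this middle step: one must check that the $q$-exceptional corrections introduced by the log resolution of $Y$ do not destroy the $\pi$-nefness required to invoke Theorem~\ref{thm:GR}, and that the Esnault--Viehweg eigensheaf decomposition isolates exactly the summand $\omega_{\tilde X}\otimes\mathcal M$. The hypothesis $\lfloor\Delta\rfloor=0$ is indispensable at precisely this point: a non-zero integral part would shift the summand by $\cO_{\tilde X}(-\lfloor\Delta\rfloor)$ and yield no information about the cohomology group we want to control.
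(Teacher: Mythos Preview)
The paper does not supply its own proof of this statement: Theorem~\ref{thm:GGR} is quoted from \cite{EFBook} as a known result and used as a black box (it is invoked once, in the verification of \eqref{eq:MAIN9}). Your outline via the Kawamata cyclic covering trick, reducing to the ordinary Grauert--Riemenschneider vanishing on the cover and then reading off the desired summand from the Esnault--Viehweg eigensheaf decomposition of $\tilde p_*\omega_{Y'}$, is exactly the standard argument in the cited reference, and your reinterpretation of the hypothesis as $\pi$-nefness of $c_1(\mathcal M)-\Delta$ is correct. So there is nothing to compare: your sketch \emph{is} the proof the paper points to.
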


\begin{thm}[\textbf{Lipman's vanishing~\cite[Theorem 11.1]{Lipman}}]
\label{thm:Lipman}
If $X$ is a rational singularity and
$\mathcal L\in {\rm Pic}(\tilde X)$ with
$-c_1({\mathcal L})\in \cS'$, then $h^1({\tilde X},{\mathcal L})=0$.
\end{thm}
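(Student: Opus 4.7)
The plan combines three ingredients: rationality makes $\mathcal L$ a natural line bundle on $\tilde X$, the integer-cycle case is handled by induction along a Laufer-type computation sequence, and the general $\QQ$-coefficient case is reduced to the integer case by descent from the universal abelian cover.

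Since $p_g(X)=0$, the sequence \eqref{eq:picard} collapses to ${\rm Pic}(\tilde X)=L'$, so I can write $\mathcal L\cong \cO_{\tilde X}(-\ell')$ with $\ell':=-c_1(\mathcal L)\in \cS'$. Suppose first that $\ell'\in L$. I build a computation sequence $0=D_0<D_1<\cdots<D_k=\ell'$ adding one $E_{v_i}$ at each step and satisfying the crucial bound $(D_{i-1},E_{v_i})\le 1$ --- the sharp combinatorial content of rationality, provided by the Artin/Laufer characterization and extended beyond the fundamental cycle by iterating. At each step the short exact sequence
$$0\to \cO_{\tilde X}(-D_i)\to \cO_{\tilde X}(-D_{i-1})\to \cO_{E_{v_i}}(-D_{i-1})\to 0,$$
with $E_{v_i}\cong \mathbb P^1$ and the bound forcing $h^1(\cO_{E_{v_i}}(-D_{i-1}))=0$ together with the restriction surjection $H^0(\cO_{\tilde X}(-D_{i-1}))\twoheadrightarrow H^0(\cO_{E_{v_i}}(-D_{i-1}))$, propagates the vanishing inductively from the base $h^1(\cO_{\tilde X})=p_g=0$ up to $h^1(\cO_{\tilde X}(-\ell'))=0$.

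For general $\ell'\in \cS'\setminus L$, I pull back to the universal abelian cover $\tilde c:\tilde Y\to \tilde X$ of diagram~\eqref{eq:diagram}. The cover $(Y,0)$ is again rational and $\tilde c^*(\ell')$ is an integer anti-nef cycle on $\tilde Y$, so the integer case just handled gives $h^1(\tilde Y,\cO_{\tilde Y}(-\tilde c^*\ell'))=0$. Using $\tilde c_*\cO_{\tilde Y}=\bigoplus_{h\in H}\cO_{\tilde X}(-r_h)$ together with the projection formula yields
$$0=h^1\bigl(\tilde Y,\cO_{\tilde Y}(-\tilde c^*\ell')\bigr)=\sum_{h\in H}h^1\bigl(\tilde X,\cO_{\tilde X}(-\ell'-r_h)\bigr),$$
forcing every summand to vanish, in particular the $h=0$ summand, which is the wanted $h^1(\tilde X,\mathcal L)$.

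The hardest step will be the joint induction in the integer case: I must simultaneously propagate $h^1$-vanishing and the restriction surjectivity along the computation sequence, and both ultimately hinge on the sharp intersection bound $(D_{i-1},E_{v_i})\le 1$. This bound makes the $\mathbb P^1$-contributions cohomologically transparent and is precisely where rationality enters decisively. Outside the rational case the bound fails and one has to retreat to Grauert--Riemenschneider vanishing (Theorem~\ref{thm:GR}) with its strictly stronger hypothesis $-c_1(\mathcal L)\in Z_K+\cS'$; Lipman's theorem is exactly what closes this gap for rational germs.
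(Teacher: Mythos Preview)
The paper does not prove this statement; it is quoted from Lipman's 1969 paper as an external input and used as a black box throughout. So there is no in-paper proof to compare against, and I assess your sketch on its own terms.

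Your outline has the right shape but contains two genuine gaps.

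\textbf{Integer case.} The intersection bound $(D_{i-1},E_{v_i})\le 1$ forces $h^1(\cO_{E_{v_i}}(-D_{i-1}))=0$, but it does \emph{not} by itself give surjectivity of $H^0(\cO_{\tilde X}(-D_{i-1}))\to H^0(\cO_{E_{v_i}}(-D_{i-1}))$. When $(D_{i-1},E_{v_i})\le 0$ the target has degree $\ge 0$ on $\PP^1$ and is nonzero, and producing enough global sections of $\cO_{\tilde X}(-D_{i-1})$ that restrict nontrivially to $E_{v_i}$ is precisely the hard content (essentially global generation of $\cO_{\tilde X}(-D)$ for anti-nef $D$, which Lipman proves by a separate argument). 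Your phrase ``simultaneously propagate'' names the difficulty but does not resolve it; the bound $\le 1$ alone is insufficient. A clean repair is to run the induction \emph{downwards}: start from some $D_k$ with $D_k-Z_K\in\cS'$ (so Theorem~\ref{thm:GR} gives the base case) and descend to $\ell'$ along steps with $(D_{i-1},E_{v_i})\le 1$; in that direction the long exact sequence gives $h^1(\cO_{\tilde X}(-D_{i-1}))=0$ directly from $h^1(\cO_{\tilde X}(-D_i))=0$ and $h^1(\cO_{E_{v_i}}(-D_{i-1}))=0$, with no $H^0$-surjectivity needed. You would still owe an argument that such a sequence through $\ell'$ exists.

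\textbf{Abelian-cover reduction.} Two problems. First, the $\tilde Y$ of diagram~\eqref{eq:diagram} is only the normalized fibre product; it generally has cyclic quotient singularities over the nodes of $E$, so your ``integer case'' cannot be applied to $\tilde Y$ directly --- you must pass to a further resolution and check the pulled-back cycle stays anti-nef there. Second, and more seriously, you assume $(Y,0)$ is rational. Via the eigensheaf decomposition that is $h^1(\cO_{\tilde X}(-r_h))=0$ for all $h$, which by~\eqref{eq:rhsh} is equivalent to $h^1(\cO_{\tilde X}(-s_h))=0$ --- an instance of the very non-integral Lipman vanishing you are proving. As written, the reduction is circular. (Rationality of $Y$ is true, but requires an independent argument, e.g.\ via Artin's criterion on the covering graph.)
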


\section{The kappa and delta invariants of reduced Weil divisors}
\label{sec:kappa-delta}
Consider $(X,0)$ with \RHS\ link,
and $(C,0)$ a reduced curve germ on it. As a standard notation for the
rest of the paper, consider $\pi:\tilde X \to X$ a good resolution of $(X,C)$ and write the total transform of
$C$ by $\pi$ as $\pi^*C=\ell'_C+\widetilde{C}$, with $\ell'_C\in L'_{\geq 0}$.
Since $C$ is not necessarily Cartier, $h:=[\ell'_C]\in H$ is not necessarily zero.

In order to motivate the general definition of the kappa invariant associated with $C$ we will study some
particular cases. In these cases, one can also see the (expected) connection with certain embedded topological
invariant (namely with $\chi(-\ell'_C)$) as well as with the delta invariant $\delta(C)$ of the
abstract curve germ $C$.
 (For more on  $\delta(C)$ see section~\ref{s:delta}.)

\subsection{Plane curve singularities}\label{ss:planecurve}
Take $f\in\cO_{\CC^2,0}$, which defines an isolated singularity, and it has $r$ local irreducible
components. Let $\pi:\tilde X\to \CC^2$ be the minimal good embedded resolution of the pair $C:=\{f=0\}\subset \CC^2$.
Let us define the ideal $\cI_C$ as the set of germs $g\in\cO_{\CC^2,0}$ such that
$\pi^*(g\cdot \frac{dx\wedge dy}{f})$ has a regular extension over all $\tilde X$ (except the strict transform
$\widetilde{C}$ of $C$). We define $\kappa(C):=\dim (\cO_{\CC^2,0}/\cI_C)$.
One of our goals is to give several interpretations of~$\kappa(C)$.

Write $\pi^*C$ as $\ell_C+\widetilde{C}$, with $\ell_C\in L$. Then one verifies that for the minimal good resolution
$\ell_C+Z_K$ is effective (usually $Z_K$ is not). Since the divisor along $E$ of $\pi^*(dx \wedge dy)$ is $-Z_K$, we obtain that
$\cI_C=H^0(\tilde X, \cO_{{\tilde X}}(-\ell_C-Z_K))\subset H^0(\tilde X, \cO_{{\tilde X}})=\cO_{\CC^2,0}$.
In the cohomology  exact sequence of
$$0\to  \cO_{{\tilde X}}(-\ell_C-Z_K)\to \cO_{{\tilde X}}\to   \cO_{\ell_C+Z_K}\to 0,$$
 $H^1(\tilde X, \cO_{{\tilde X}}(-\ell_C-Z_K))=0$ by Grauert--Riemenschneider vanishing~\ref{ss:vanishingth},
and $H^1(\tilde X, \cO_{{\tilde X}})=0$ since $\CC^2$ is smooth with geometric genus zero. Hence
$H^1( \cO_{\ell_C+Z_K})=0$ too.  Therefore, $\kappa(C)=\chi(\cO_{{\ell_C+Z_K}})=\chi(\ell_C+Z_K)=\chi(-\ell_C)$.

Next, using~\eqref{eq:ZK} and the fact that $\ell_C+\widetilde{C}
$ is numerically trivial, and the well--known formula of A'Campo for
the Milnor number $\mu(f)$ of $f$~\cite{AC}, a computation gives $\chi(-\ell_C)=(r-1+\mu(f))/2$.
On the other hand, by Milnor's formula~\cite{MBook}, we have that $(r-1+\mu(f))/2$
equals the delta invariant $\delta(C)$. Hence
$$\kappa(C)=\chi(-\ell_C)=\delta(C).$$

\subsection{\texorpdfstring{$\kappa$}{kappa}--invariant for cyclic quotient singularities}
\label{ss:kappaCYCLIC}
Let $(X,0)$ be the cyclic quotient singularity $\CC^2/\ZZ_d$, sometimes denoted also as $\frac{1}{d}(1,q)$, according to the action
$\ZZ_d\times  \CC^2\to \CC^2$, $\xi*(x,y)=(\xi x,\xi^q y)$, where $\gcd(d,q)=1$ and $\xi$ is a
$d$--th root of unity.
In this way $\CC^2$ appears as the universal abelian cover $Y$ of $X$ with $H=\ZZ_d$,
and the construction in~\ref{ss:uac} applies. In particular, $\cO_{Y,0}$ has an eigenspace decomposition
$\oplus_h (\cO_{Y,0})_h$, where $(\cO_{Y,0})_h=H^0(\tilde X, \cO_{\tilde X}(-r_h))$ is the $\theta(h)$--eigenspace.
Any $f\in (\cO_{Y,0})_h$ gives two objects: firstly, $\{f=0\}$ is a curve germ in $(\CC^2,0)$, however $c(\{f=0\})$ is an effective  Weil
divisor $(C,0)$ of $(X,0)$.
In this way one realizes all the effective Weil divisors in~$(X,0)=(\CC^2/\ZZ_d,0)$.

Next, using the duality of finite maps, with the notations $\omega_{\tilde Y}=\cO_{\tilde Y}(K_{\tilde Y})$ and
$\omega _{\tilde X}=\cO_{\tilde X}(K_{\tilde X})$ we have
$\widetilde{c}_*\omega_{\tilde Y}={\mathcal Hom}_{\cO_{\tilde X}}(\widetilde {c}_*\cO_{\tilde Y},\omega_{\tilde X})=
{\mathcal Hom}_{\cO_{\tilde X}}(\oplus _h \cO_{\tilde X}(-r_h),\omega_{\tilde X})$ (cf.~\cite[Lemma 1.5]{EF} and its proof).
Since $X$ is $\QQ$--Gorenstein, $\omega_{\tilde X}=\cO_{\tilde X}(-Z_K)$, hence  $(\Omega^2_Y)_0=H^0(\tilde Y, \omega_{\tilde Y})$
has an $H$--action and an eigenspace decomposition $\oplus _h H^0(\tilde X, \cO_{\tilde X}(r_h-Z_K))$,
where $H^0(\tilde X, \cO_{\tilde X}(r_h-Z_K))$ is the $\theta(h-[Z_K])$--eigenspace.
The Gorenstein form on $\tilde Y$, $\pi^*_Y(dx \wedge dy)$, is an eigenvector in
$H^0(\tilde X, \cO_{\tilde X}(-Z_K))=H^0(\tilde X, \omega_{\tilde X})$.
Since $\xi*(dx\wedge  dy)=\xi^{1+q}dx\wedge dy$, one obtains that $[-Z_K]\in L'/L=H$ is in
 fact $(1+q)\ ({\rm mod}\ d)$.

Now, let us fix $h\in H$,  and $f\in (\cO_Y)_h$, or equivalently an effective  Weil divisor $C$.
We are searching for the subspace $M_f$ of sections $g\in (\cO_Y)_{h'}=H^0(\tilde X, \cO_{\tilde X}(-r_{h'}))$
such that $\pi^*_Y(g\frac{dx\wedge dy}{f})$, interpreted in the corresponding eigenspaces, is $H$--invariant
and it can be extended holomorphically over the generic points of each $E_v$.
This imposes two conditions, namely, $h'=h+[Z_K]$ and ${\rm div}_E(g)\geq Z_K+\ell_C'$,
where $\pi^*C=\ell_C'+\widetilde {C}$. In~\cite[Def. 3.5]{ji-JJ-numerical} (see also~\cite{ji-tesis} and \cite{CM})
the following invariant was defined associated with this context
\begin{equation}
\label{eq:kappa}
\kappa_X(C):=\dim_\CC\frac{(\cO_Y)_{h'}}{M_f}.
\end{equation}
It is shown that $\kappa_X(C)$, defined in this way, is independent of the resolution $\pi$ of $X$
(see~\cite[Prop. 2.6]{ji-JJ-numerical}). In a subsequent paper the following properties were proved:
\begin{thm}[\cite{CM}]
\label{thm:kappa}
If $(X,0)$ is a cyclic quotient surface singularity and $(C,0)$ is a reduced curve of it, then
 $\kappa_X(C)=\delta(C)$. Furthermore, in the special case when
 $\pi^*C=\widetilde C+s_h$, then
 $\kappa_X(C)=r-1$, where  $r$ is the number of irreducible components of $C$.
\end{thm}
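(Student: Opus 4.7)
The plan is to translate the analytic definition of $\kappa_X(C)$ into a Hilbert--function quantity on $\tilde X$, compute it by means of the standard vanishing theorems, and then compare with $\delta(C)$ through Theorem~\ref{thm:INTR1}.

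First, I would rewrite the $M_f$--condition divisorially. Since $\pi_Y^*(dx\wedge dy)$ is a generator of $\omega_{\tilde X}=\cO_{\tilde X}(-Z_K)$ viewed as the $\theta(-[Z_K])$--eigenspace of $\widetilde c_*\omega_{\tilde Y}$, the requirement that $\pi_Y^*\bigl(g\cdot dx\wedge dy/f\bigr)$ extend holomorphically over the generic point of every $E_v$ becomes the integral valuation inequality $\mathrm{div}_E(g)\geq Z_K+\ell_C'$ on $\tilde X$. Combined with the identification $(\cO_Y)_{h'}\cong H^0(\tilde X,\cO_{\tilde X}(-r_{h'}))$ from~\ref{ss:uac} (with $h'=h+[Z_K]$), and the fact that $Z_K+\ell_C'-r_{h'}\in L$ is effective on a sufficiently good resolution, one obtains
\begin{equation*}
\kappa_X(C)\;=\;\dim\frac{H^0(\tilde X,\cO_{\tilde X}(-r_{h'}))}{H^0(\tilde X,\cO_{\tilde X}(-Z_K-\ell_C'))}\;=\;\mathfrak h(Z_K+\ell_C')\;=\;\mathfrak h(-K_\pi+\ell_C').
\end{equation*}

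Next, I would apply~\eqref{eq:hell} to $\ell'=Z_K+\ell_C'$. The term $h^1(\cO_{\tilde X}(-Z_K-\ell_C'))$ vanishes by Grauert--Riemenschneider (Theorem~\ref{thm:GR}), because $\ell_C'\in\cS'$ (as $\pi^*C$ is numerically trivial) and hence $Z_K+\ell_C'\in Z_K+\cS'$. Passing from $r_{h'}$ to $s_{h'}$ through~\eqref{eq:rhsh} and invoking Lipman's vanishing (Theorem~\ref{thm:Lipman}), valid since cyclic quotients are rational, the remaining correction becomes $-\chi(s_{h'})$. Using the elementary identity $\chi(Z_K+\ell_C')=\chi(-\ell_C')$, this yields
\begin{equation*}
\kappa_X(C)\;=\;\chi(-\ell_C')-\chi(s_{[-K_\pi+\ell_C']}),
\end{equation*}
which, by Theorem~\ref{thm:INTR1}, is exactly $\delta(C)$.

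For the second statement, I would use the fact that on a good embedded resolution each irreducible branch $C_i$ of $C$ satisfies $\ell'_{C_i}=E^*_{v_i}$, where $v_i$ is the unique vertex met by the strict transform $\widetilde C_i$. Therefore $\ell_C'=\sum_{i=1}^r E^*_{v_i}$, and the hypothesis $\pi^*C=\widetilde C+s_h$ forces this sum to realize the minimum of $\cS'_h$. In particular $\widetilde C$ is a disjoint union of $r$ curvettes attached at distinct points of $E$, each $C_i\subset X$ is smooth, and the normalization $\overline\cO_C\cong\prod_{i=1}^r\CC\{t_i\}$ contains $\cO_C$ as the subring of tuples sharing the common value at the origin; hence $\delta(C)=r-1$.

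The main technical obstacle is the divisorial translation in the very first step: one must match carefully the $H$--character on both sides and remember that $r_{h'}$ is a genuine $\QQ$--divisor on $\tilde X$, while the pole condition on $g$ is an integral valuation along the components of $E$. Once this bookkeeping is pinned down, the rest of the argument is a rather formal application of the vanishing theorems recalled in~\ref{ss:vanishingth} together with the main comparison Theorem~\ref{thm:INTR1} of the paper.
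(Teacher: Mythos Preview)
Your argument for the first identity $\kappa_X(C)=\delta(C)$ is correct and is exactly how the paper recovers this statement: one rewrites $\kappa_X(C)$ as $\mathfrak h(Z_K+\ell_C')$ (the derivation in \S\ref{ss:kappaCYCLIC} leading to \eqref{eq:dim} and Definition~\ref{def:kappa}), applies \eqref{eq:hell} together with Grauert--Riemenschneider and Lipman vanishing to obtain Lemma~\ref{eq:kappanew2}, and then invokes the main comparison Theorem~\ref{thm:INTR1}. So nothing to add there.

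The second part, however, has a genuine gap. From $\ell_C'=s_h$ and the good embedded resolution you correctly conclude that $\widetilde C$ is a disjoint union of $r$ curvettes; but the two subsequent assertions --- that each $C_i\subset X$ is smooth, and that $\cO_C$ sits inside $\prod_i\CC\{t_i\}$ as the subring of tuples with common constant term --- are precisely the content of ``$(C,0)$ is an ordinary $r$--tuple'', which is the thing to be proved. Neither follows formally from the curvette description: a curvette can very well push down to a singular branch on a normal surface, and even with all branches smooth one still needs $(C^i,C_i)_{Hir}=1$ for every $i$ in Hironaka's formula \eqref{eq:delta12} to reach $\delta(C)=r-1$.

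The paper supplies exactly this missing geometric input. In Example~\ref{ex:2proof} (relying on Examples~\ref{ex:delta33}--\ref{ex:delta22}) one uses that cyclic quotients are rational \emph{Kulikov} singularities: the generic linear section is an ordinary $r$--tuple with $r=-Z_{min}^2$, and any sub--collection of its branches is again an ordinary $r'$--tuple. The crucial step specific to $\ell_C'=s_h$ is the inequality $r_v'\le r_v$ (with $s_h=\sum_v r_v'E_v^*$ and $r_v=-(Z_{min},E_v)$), extracted from the explicit algorithm of \cite[10.3.3]{NemOSZ}; this is what allows one to realize $C$ as such a sub--collection. Alternatively, Example~5.9 gives a purely arithmetical verification of $\chi(-s_h)-\chi(s_{-h})=r-1$ via the closed formulae for $\chi(s_{[aE_s^*]})$ from \cite{NemOSZ}. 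Either route would close the gap; as written, your last paragraph asserts the conclusion rather than proving it.
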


The main purpose of the upcoming sections is to discuss possible generalizations of the definition~\eqref{eq:kappa} of
the $\kappa$--invariant to curves in \RHS\ surface singularities, their interplay, as well as their effect on general versions of
Theorem~\ref{thm:kappa}.  Note that by the above discussion the definition of $\kappa_X(C)$ in fact reads as
\begin{equation}
\label{eq:dim}
\dim_\CC \frac{H^0(\tilde X, \cO_{\tilde X}(-r_{[Z_K+\ell'_C]}))}{H^0(\tilde X, \cO_{\tilde X}(-Z_K-\ell'_C))},
\end{equation}
which suggests a possible generalization (cf.~next subsection).

\subsection{\texorpdfstring{$\kappa$}{kappa}--invariant for a surface
singularity with \RHS\  link}
\label{sec:kappa}
In this section we follow the notation from section~\ref{s:Pseries} and~\ref{sec:kappa-delta}.
Motivated by the cyclic quotient singularity case (and equation~\eqref{eq:hdim}), the right candidate for
the $\kappa$--invariant associated with an
exceptional cycle $\ell'\in L'$ in the general \RHS\ surface singularity case is the following.

\begin{dfn}
For any $\ell'\in L'$, we define
\begin{equation}\label{kappa_big}
\kappa_X(\ell'):=\mathfrak{h}(Z_K+\ell')=\dim_\CC \Big(\frac{\cO_{Y,0}}{\mathcal{F}(Z_K+\ell')}\Big)_{[Z_K+\ell']}.
\end{equation}
\end{dfn}
Note that $\kappa_X(\ell')$ in principle depends on the resolution $\tilde X$. Our purpose now is to study
the behavior of $\kappa_X(\ell')$ with the aim of defining an invariant of $(C,0)$ on~$(X,0)$.

\begin{lemma}
Let $(C,0)$ be a reduced curve in $(X,0)$, then
\begin{equation}\label{eq:kappanew}
\array{rcl}
\kappa_X(\ell_C')&=&\chi(-\ell_C')-\chi(r_{[Z_K+\ell_C']})+h^1(\cO_{\tilde X}(-r_{[Z_K+\ell_C']}))\\
&=&\chi(-\ell_C')-\chi(s_{[Z_K+\ell_C']})+h^1(\cO_{\tilde X}(-s_{[Z_K+\ell_C']})).
\endarray
\end{equation}
\end{lemma}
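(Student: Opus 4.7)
My plan is to apply the general formula~\eqref{eq:hell} directly to $\ell' = Z_K + \ell_C'$, setting $h' := [Z_K + \ell_C']$. Since by definition $\kappa_X(\ell_C') = \mathfrak{h}(Z_K + \ell_C')$, the formula yields
$$\kappa_X(\ell_C') = \chi(Z_K + \ell_C') - \chi(r_{h'}) - h^1(\cO_{\tilde X}(-Z_K - \ell_C')) + h^1(\cO_{\tilde X}(-r_{h'})).$$
From here two simplifications should reduce this identity to the claimed first expression.

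The first simplification is the Serre-duality-like symmetry $\chi(\ell') = \chi(Z_K - \ell')$, valid for every $\ell' \in L'$ and immediate from the defining formula $\chi(\ell') = -(\ell', \ell' - Z_K)/2$. Applied with $\ell' = Z_K + \ell_C'$ this gives $\chi(Z_K + \ell_C') = \chi(-\ell_C')$. The second simplification is the vanishing $h^1(\cO_{\tilde X}(-Z_K - \ell_C')) = 0$, which I would obtain by invoking Grauert--Riemenschneider (Theorem~\ref{thm:GR}) applied to $\mathcal{L} = \cO_{\tilde X}(-Z_K - \ell_C')$: the required hypothesis $-c_1(\mathcal{L}) = Z_K + \ell_C' \in Z_K + \cS'$ amounts to $\ell_C' \in \cS'$, which holds because $\pi^*C = \widetilde C + \ell_C'$ is $\pi$-numerically trivial and $(\widetilde C, E_v) \geq 0$ force $(\ell_C', E_v) \leq 0$ for every $v \in \V$. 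Substituting these two simplifications produces the first asserted equality.

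The second equality then follows by invoking~\eqref{eq:rhsh} directly, namely $h^1(\cO_{\tilde X}(-r_{h'})) - \chi(r_{h'}) = h^1(\cO_{\tilde X}(-s_{h'})) - \chi(s_{h'})$, which records the effect of running the generalized Laufer computation sequence from $r_{h'}$ up to its minimal anti-nef representative $s_{h'}$. The main technical obstacle is legitimizing the use of~\eqref{eq:hell}, since that formula was stated under the hypothesis $\ell' = \ell + r_h > 0$, whereas $Z_K + \ell_C'$ need not be effective in a given good resolution. I would handle this either by passing to a sufficiently blown-up good resolution in which $Z_K + \ell_C' \geq 0$ and checking that each term on the right is insensitive to further blow-ups, or by reinterpreting the cohomology exact sequence~\eqref{eq:restriction2} through the natural line bundles $\cO_{\tilde X}(\ell')$ of~\cite{Nem-CLB} so that the identity~\eqref{eq:hell} continues to hold verbatim for the $\ell'$ at hand.
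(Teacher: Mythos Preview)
Your proposal is correct and follows essentially the same route as the paper: apply~\eqref{eq:hell} to $\ell'=Z_K+\ell_C'$, use $\ell_C'\in\cS'$ together with Grauert--Riemenschneider to kill $h^1(\cO_{\tilde X}(-Z_K-\ell_C'))$, rewrite $\chi(Z_K+\ell_C')=\chi(-\ell_C')$, and then invoke~\eqref{eq:rhsh}. Your extra paragraph on the effectivity hypothesis in~\eqref{eq:hell} is a point the paper does not comment on, and your suggested fixes (blowing up further, or working with the natural line bundles) are both adequate.
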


\begin{proof}
By~\eqref{kappa_big}, $\kappa_X(\ell'_C)=\mathfrak{h}(Z_K+\ell_C')$. Also, note that $\ell_C'\in {\mathcal S}'$,
hence $h^1(\cO_{\tilde X}(-Z_K-\ell_C'))=0$ by Grauert--Riemenschneider vanishing theorem~\ref{thm:GR}.
Furthermore, $\chi(Z_K+\ell_C')=\chi(-\ell_C')$, hence~\eqref{eq:hell} combined with~\eqref{eq:rhsh}
gives the result.
\end{proof}

\begin{cor}\label{thm:kindependent}
If $(C,0)$ is a reduced curve germ in a surface singularity $(X,0)$ with  \RHS\ link, then $\kappa_X(\ell'_C)$
does not depend on the chosen good resolution $\pi$ of~$(X,C)$.
\end{cor}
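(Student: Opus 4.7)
The strategy is to reduce to invariance under a single blowup and verify each term of the formula
$$
\kappa_X(\ell'_C)=\chi(-\ell'_C)-\chi(r_h)+h^1\!\bigl(\cO_{\tilde X}(-r_h)\bigr),\qquad h:=[Z_K+\ell'_C],
$$
which follows from~\eqref{eq:kappanew} together with~\eqref{eq:rhsh}. Since any two good embedded resolutions of $(X,C)$ are dominated by a common refinement, it suffices to consider a single blowup $\sigma:\tilde X'\to\tilde X$ at a point $p$ for which $\pi\circ\sigma$ is again a good embedded resolution. The multiplicity $a:=\mathrm{mult}_p(\widetilde C)$ then lies in $\{0,1\}$, and the standard blowup formulas give that $\ell'_C$ becomes $\sigma^*\ell'_C+aE_0$ while $Z_K$ becomes $\sigma^*Z_K-E_0$. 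Hence $Z_K+\ell'_C$ becomes $\sigma^*(Z_K+\ell'_C)+(a-1)E_0$; as $E_0$ lies in the new integral lattice, the class $h\in H$ is unchanged under the topological identification $L'/L\cong H_1(\Sigma,\ZZ)$.

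The invariance of $\chi(-\ell'_C)$ follows from a direct bilinear computation using $(\sigma^*\alpha,\sigma^*\beta)=(\alpha,\beta)$, $(\sigma^*\alpha,E_0)=0$, $E_0^2=-1$ and $\chi(\cdot)=\tfrac12\bigl((\cdot,Z_K)-(\cdot)^2\bigr)$: the discrepancy reduces to $a(a-1)/2=0$. For the invariance of $\chi(r_h)$, the characterization of $r_h$ as the unique representative of $h$ with coefficients in $[0,1)$ forces the new $r_h$ to be $\sigma^*r_h-kE_0$, where $k\in\{0,1\}$ is the floor of the sum $a_p$ of the $r_h$-coefficients at the components of $E$ meeting $p$; the same kind of computation yields the discrepancy $k(k-1)/2=0$. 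Finally, the invariance of $h^1(\cO_{\tilde X}(-r_h))$ is conceptual: since $\tilde c$ is finite and $\tilde c_*\cO_{\tilde Y}=\bigoplus_{h\in H}\cO_{\tilde X}(-r_h)$, this cohomology is the $\theta(h)$-isotypic component of $H^1(\tilde Y,\cO_{\tilde Y})=p_g(Y,0)$, an analytic invariant of the universal abelian cover $(Y,0)$ of $(X,0)$.

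The delicate step is the invariance of $\chi(r_h)$: the minimal representative $r_h$ transforms non-linearly under $\sigma$ through the fractional parts of the entries of $\sigma^*r_h$, so one must simultaneously track the shift $-E_0$ of $Z_K$ and the floor shift $-kE_0$ of $r_h$. The bilinear calculation produces quadratic discrepancies of the form $x(x-1)/2$ with $x\in\{a,k\}$, and these vanish precisely because $a$ and $k$ are forced into $\{0,1\}$ by the simple-normal-crossings condition of a good embedded resolution of $(X,C)$.
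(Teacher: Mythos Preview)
Your proof is correct and follows essentially the same route as the paper's: both use the first identity of~\eqref{eq:kappanew}, identify $h^1(\cO_{\tilde X}(-r_h))$ as the $h$-isotypic piece of the equivariant geometric genus (hence resolution independent), and verify the invariance of $\chi(r_h)$ and $\chi(-\ell'_C)$ by a direct blowup computation in which $r_h'\in\{\sigma^*r_h,\,\sigma^*r_h-E_0\}$ and the multiplicity $a\in\{0,1\}$. Your presentation is somewhat more explicit in tracking the quadratic discrepancies $x(x-1)/2$, but the argument is the same.
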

\begin{proof}
Consider
 the three terms from the right hand side of the first identity of (\ref{eq:kappanew}).  The term
 $h^1(\cO_{\tilde X}(-r_h))$ is the equivariant geometric genus of $(X,0)$ (cf. \cite{Nem-GR,Nem-CLB}), a resolution
independent object. Next, we show that $\chi(r_h)$ is also resolution independent. If $\pi$ is a  blow up of  a point on $E$ and  $E_{new}$
is  the new exceptional curve,
then in the new resolution graph $\Gamma'$ one has $Z_K'=\pi^*(Z_K)-E_{new}$, and $r_h'$ is either $\pi^*(r_h)$ or
$\pi^*(r_h)-E_{new}$. Then by a computation $\chi_{\Gamma'}(r_h')=\chi_\Gamma(r_h)$.
A similar proof shows that $\chi(-\ell_C')$ is also stable (here the assumption that $(C,0)$ is reduced is necessary, and one needs to separate the cases when the center of $\pi$ is contained or not
in $E\cap\widetilde{C}$).
\end{proof}

In the above statement  the fact that $(C,0)$ is reduced is necessary:
see  e.g. a multiple line in $(\CC^2,0)$.
This justifies the following definition of the $\kappa$--invariant of a reduced curve germ $C\subset X$ in a
\RHS\ surface singularity (extending~\eqref{eq:kappa} for quotient singularities):

\begin{dfn}
\label{def:kappa}
Let $(X,0)$ be a \RHS\ surface singularity and $(C,0)\subset (X,0)$ a reduced curve germ. The $\kappa$--invariant of $(C,0)$ in $(X,0)$
is defined as
\begin{equation}
\kappa_X(C):=\kappa_X(\ell'_C)=\mathfrak{h}(Z_K+\ell_C').
\end{equation}
\end{dfn}

The terms $\chi(-\ell_C')$ and $\chi(r_{[Z_K+\ell_C']})$ in~\eqref{eq:kappanew} are embedded  topological, while
$h^1(\cO_{\tilde X}(-r_{[Z_K+\ell_C']}))$  depends   on the homological embedding
of $(C,0)$ {\it and the analytic type of $(X,0)$}. Hence, {\it  once $(X,0)$ is fixed},
$\kappa_X(\ell_C')$ depends only on the topological embedding  $(C,0)\subset(X,0)$.

\subsection{Discussion on the $\kappa$--invariant of Cartier divisors}
\label{ex:ratnew}
Let $(X,0)$ be any normal surface singularity (where the link is not necessarily \RHS).
We assume that $(C,0)$ is reduced nonzero Cartier divisor of $(X,0)$, hence $[\ell_C']=0$ in $H$.
Then we claim that
$$\chi(-\ell'_C)=\delta(C) \ \ \mbox{(whenever $C$ is Cartier)}.$$
This can be verified in several ways. E.g., similarly as in~\ref{ss:planecurve}, if $C$ is cut
out by the holomorphic function $f$, then using A'Campo's theorem $\chi(-\ell'_C)=(r-1+\mu(f))/2$,
where, as above, $r$ is the number of irreducible components of $C=\{f=0\}$,
and $\mu(f)$ is the
Milnor number. Furthermore, by~\cite{BG80}, $(r-1+\mu(f))/2=\delta(C)$.
Note that both steps, A'Campo's and Buchweitz--Greuel's theorems, need the fact that $C$ is Cartier.

An alternative, sheaf--theoretical, proof runs as follows: using the sequence
$$0\to \cO_{{\tilde X}}(-\widetilde{C})\to \cO_{{\tilde X}}\to \cO_{\widetilde{C}}\to 0$$ one shows that
$\delta(C)=h^1(\cO_{{\tilde X}}(-\widetilde{C}))-p_g(X)$, while using
$0\to \cO_{{\tilde X}}\to \cO_{{\tilde X}}(\ell'_C)\to \cO_{\ell'_C}(\ell'_C)\to 0$ one shows that
$\chi(-\ell'_C)=h^1(\cO_{{\tilde X}}  (\ell'_C))-p_g(X)$
(for details see the proof of \eqref{eq:MAIN1}--\eqref{eq:MAIN2} below). Then
$\cO_{{\tilde X}}(-\widetilde{C})\simeq\cO_{{\tilde X}}(\ell'_C)$ whenever $C$ is Cartier.
(Here, since all the entries of $E^*_v$'s are positive, $\ell_C'$ is effective.)

Finally, the identity follows from Theorem \ref{thm:INTR4} too, which says that $A_{X,0}(C)=
\chi(-\ell_C') -\delta(C)$
depends only on the class of $C$ in ${\rm Weil}(X)/{\rm Cartier}(X)$, see also Example \ref{ex:Bl}.

In addition, if $X$ is rational, then $h^1(\cO_{{\tilde X}}(-s_{[Z_K+\ell'_C]}))=0$
(by Lipman's vanishing~\ref{thm:Lipman}), and
$\chi(s_{[Z_K+\ell'_C]})=\chi(s_{-[\ell_C']})=\chi(s_0)=\chi(0)=0$ (see Theorem~\ref{thm:kappaCoh}).
Hence~\eqref{eq:kappanew} implies
$$\kappa_X(C)=\chi(-\ell'_C)=\delta(C)\ \ \mbox{(whenever $X$ is rational and $C$ is Cartier)}.$$
The validity of such an identity $\kappa_X(C)=\delta(C)$ also shows that $\delta(C)$ depends only on
the topological position of $\widetilde {C}$ in $\tilde X$, that is, it only depends on the number
of irreducible components of $\widetilde {C}$ intersected by each $E_v$, whereas the analytic position of
the components of $\widetilde {C}$ is not essential. In particular, $\delta(C)$ can be read from (any)
embedded resolution graph of the pair~$(X,C)$.

\section{The Main Theorem}
\label{sec:mainthm}
For further references let us specify the embedded topological description of~$\kappa_X(C)$
in the rational case.
\begin{lemma}\label{eq:kappanew2}
Assume that $(X,0)$ is rational and $(C,0)$ is a reduced curve germ on it (not necessarily Cartier).
Then
$$\kappa_X(C)=\chi(-\ell_C')-\chi(s_{[Z_K+\ell_C']}).$$
\end{lemma}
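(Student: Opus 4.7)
The plan is to deduce the identity by a single application of Lipman's vanishing theorem to the formula already established in \eqref{eq:kappanew}. Recall that the preceding lemma gives
\[
\kappa_X(\ell_C')=\chi(-\ell_C')-\chi(s_{[Z_K+\ell_C']})+h^1(\cO_{\tilde X}(-s_{[Z_K+\ell_C']})),
\]
without any rationality assumption on $(X,0)$. Since $\kappa_X(C)=\kappa_X(\ell_C')$ by Definition~\ref{def:kappa}, the only remaining task is to show that the correction term $h^1(\cO_{\tilde X}(-s_{[Z_K+\ell_C']}))$ vanishes when $(X,0)$ is rational.

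For this vanishing I would invoke directly Lipman's Theorem~\ref{thm:Lipman}. By construction (cf.~\ref{ss:HrepLip}), the cycle $s_{[Z_K+\ell_C']}$ is the unique minimal element of the class $[Z_K+\ell_C']\in H$ lying in the Lipman (antinef) cone $\cS'$; in particular $s_{[Z_K+\ell_C']}\in\cS'$. Taking $\mathcal L:=\cO_{\tilde X}(-s_{[Z_K+\ell_C']})\in \Pic(\tilde X)$, one has $-c_1(\mathcal L)=s_{[Z_K+\ell_C']}\in\cS'$, so the hypotheses of Theorem~\ref{thm:Lipman} are met and $h^1(\tilde X,\mathcal L)=0$ follows. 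Substituting into the displayed formula yields exactly $\kappa_X(C)=\chi(-\ell_C')-\chi(s_{[Z_K+\ell_C']})$.

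I do not expect any serious obstacle: both ingredients (the Riemann--Roch style rewriting of $\mathfrak h(Z_K+\ell_C')$ and the cohomological vanishing specific to rational singularities) have been prepared in Section~\ref{s:GenAn} and in the proof of \eqref{eq:kappanew}. The only subtle point worth double-checking is that Lipman's theorem is applicable to the natural line bundle $\cO_{\tilde X}(-s_{[Z_K+\ell_C']})$ even though $s_{[Z_K+\ell_C']}\in L'$ need not lie in $L$; but since $(X,0)$ is rational we have $\Pic(\tilde X)=L'$ by \eqref{eq:picard}, so the natural line bundle is well-defined and the vanishing theorem applies verbatim.
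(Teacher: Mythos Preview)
Your argument is correct and follows exactly the paper's own proof, which simply cites \eqref{eq:kappanew} together with Lipman's vanishing Theorem~\ref{thm:Lipman}. Your extra remark that $\Pic(\tilde X)=L'$ in the rational case ensures the natural line bundle $\cO_{\tilde X}(-s_{[Z_K+\ell_C']})$ is available is a helpful clarification, but not an additional ingredient.
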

\begin{proof} Use~\eqref{eq:kappanew} and  Lipman's vanishing~\ref{thm:Lipman}.
\end{proof}

Next we prove the identity $\delta(C)=\kappa_X(C)$ for $(X,0)$ is
rational and $(C,0)$ is reduced. Additionally, we will identify the obstruction
to this identity in the non--rational case.

\begin{thm}\label{thm:kappaCoh}
Let $(C,0)$ be a reduced curve germ on $(X,0)$, $\pi^*C=\widetilde{C}+\ell'_C$ and $h:=[\ell'_C]$.
\begin{enumerate}
 \item \label{thm:kappaCoh1}
 If $(X,0)$ is a normal surface singularity with \RHS\  link, then
\begin{equation}\label{eq:MAIN1}
\chi(-\ell'_C)-\chi(s_{-h})=h^1({\tilde X}, \cO_{{\tilde X}}(\ell'_C))-
h^1({\tilde X}, \cO_{{\tilde X}}(-s_{-h})).\end{equation}
 \item \label{thm:kappaCoh2}
 If $(X,0)$ is a normal surface singularity (with a non--necessarily \RHS\  link), then
\begin{equation}\label{eq:MAIN2}
\delta(C)=h^1({\tilde X}, \cO_{{\tilde X}}(-\widetilde{C}))-p_g(X,0).
\end{equation}
 \item \label{thm:kappaCoh3}
 If $(X,0)$ is a rational singularity, then
\begin{enumerate}
 \item\label{thm:kappaCoh3a}
 $p_g(X,0)=0$, $h^1({\tilde X}, \cO_{{\tilde X}}(-s_{-h}))=0$,
 \item\label{thm:kappaCoh3b}
 $\chi(s_{-h})= \chi(s_{[Z_K]+h})$,
 \item\label{thm:kappaCoh3c}
 $h^1({\tilde X}, \cO_{{\tilde X}}(\ell'_C))=h^1({\tilde X}, \cO_{{\tilde X}}(-\widetilde{C}))$.
\end{enumerate}
\end{enumerate}
In particular (using Lemma  \ref{eq:kappanew2} as well)
\begin{equation}\label{eq:MAIN4}
\kappa_X(C)=\delta(C)=\chi(-\ell'_C)-\chi(s_{[Z_K]+h})=\chi(-\ell'_C)-\chi(s_{-h}).
\end{equation}
\end{thm}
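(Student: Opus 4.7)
The plan is to dispatch the independent parts first and leave (3)(b) for last. Part (1) is a direct rearrangement of the Laufer-invariance identity \eqref{eq:sell'} with $\ell'=-\ell'_C\in L'_{-h}$: since $-\ell'_C\le 0$, the remark at the end of \ref{ss:GLA} gives $s(-\ell'_C)=s_{-h}$, and \eqref{eq:sell'} becomes exactly \eqref{eq:MAIN1}. Part (3)(c) uses that for rational $(X,0)$ the exponential sequence \eqref{eq:picard} collapses to $\mathrm{Pic}(\tilde X)\cong L'$ via $c_1$; a short intersection-number computation from $(\pi^*C,E_v)=0$ identifies the class of $\widetilde{C}$ in $L'=H_2(\tilde X,\Sigma,\ZZ)$ with $-\ell'_C$, so $\cO_{\tilde X}(-\widetilde{C})$ and $\cO_{\tilde X}(\ell'_C)$ share their Chern class and hence coincide. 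Part (3)(a) is then immediate: $p_g=0$ by definition of rationality, and the two $h^1$ vanishings are Lipman's Theorem \ref{thm:Lipman} applied to $s_{-h},s_{[Z_K]+h}\in\cS'$. For part (2), push the sequence $0\to\cO_{\tilde X}(-\widetilde{C})\to\cO_{\tilde X}\to\cO_{\widetilde{C}}\to 0$ forward by $R\pi_*$: goodness of the embedded resolution makes $\pi|_{\widetilde{C}}$ the normalization of $C$, so $\pi_*\cO_{\widetilde{C}}=\overline{\cO_{C,0}}$ and $\pi_*\cO_{\tilde X}(-\widetilde{C})$ is the ideal sheaf $\mathcal I_C$. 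The resulting five-term exact sequence realizes $\overline{\cO_{C,0}}/\cO_{C,0}$ as $\ker\bigl(R^1\pi_*\cO_{\tilde X}(-\widetilde{C})_0\to R^1\pi_*\cO_{\tilde X,0}\bigr)$, and a dimension count produces \eqref{eq:MAIN2}.

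The technical heart is (3)(b): $\chi(s_{-h})=\chi(s_{[Z_K]+h})$. The plan is to prove it via a new vanishing theorem valid only in the rational case (announced in the introduction): $h^1(\cO_{\tilde X}(K_\pi+s_h))=0$ for every $h\in H$. Granting this, apply \eqref{eq:sell'} to $\ell'=Z_K-s_{-h}\in L'_{[Z_K]+h}$. The left side becomes $0-\chi(Z_K-s_{-h})=-\chi(s_{-h})$, using the new vanishing with index $-h$ together with the involutive symmetry $\chi(\ell)=\chi(Z_K-\ell)$, which is an immediate Riemann--Roch check. The right side is $-\chi(s(Z_K-s_{-h}))$ by Lipman. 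One concludes $\chi(s_{-h})=\chi(s(Z_K-s_{-h}))$, after which the remaining task is to identify $s(Z_K-s_{-h})$ with $s_{[Z_K]+h}$, equivalently to show $s_{[Z_K]+h}+s_{-h}\ge Z_K$. Both this inequality and the underlying new vanishing are the main obstacles; I expect them to be proved by a carefully designed generalized Laufer computation sequence tailored to the combinatorics of rational graphs, exploiting Artin's criterion $\chi(Z)\ge 1$ on positive integral Lipman cycles.

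Once (1)--(3) are in hand, the composite chain \eqref{eq:MAIN4} assembles with no further work. Lemma \ref{eq:kappanew2} gives $\kappa_X(C)=\chi(-\ell'_C)-\chi(s_{[Z_K]+h})$; (3)(b) rewrites it as $\chi(-\ell'_C)-\chi(s_{-h})$; combining \eqref{eq:MAIN1} with the vanishings of (3)(a) turns this into $h^1(\cO_{\tilde X}(\ell'_C))$; (3)(c) converts the latter to $h^1(\cO_{\tilde X}(-\widetilde{C}))$; and \eqref{eq:MAIN2} with $p_g=0$ finally yields $\delta(C)$.
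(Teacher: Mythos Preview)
Your treatment of parts \eqref{thm:kappaCoh1}, \eqref{thm:kappaCoh2}, \eqref{thm:kappaCoh3a}, \eqref{thm:kappaCoh3c} and the final chain \eqref{eq:MAIN4} is correct and matches the paper's argument essentially step for step (the paper argues part \eqref{thm:kappaCoh2} by taking cohomology on $\tilde X$ directly, using that $\widetilde{C}$ is Stein so $H^1(\cO_{\widetilde{C}})=0$, but this is equivalent to your $R\pi_*$ five-term sequence since $X$ is affine).

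For \eqref{thm:kappaCoh3b} you correctly isolate the new vanishing $h^1(\cO_{\tilde X}(-Z_K+s_h))=0$ (Proposition~\ref{prop:VAN}) as the heart of the matter, but your route to the identity creates an unnecessary second obstacle. By applying \eqref{eq:sell'} to $\ell'=Z_K-s_{-h}$ you are forced to prove $s(Z_K-s_{-h})=s_{[Z_K]+h}$, i.e.\ the combinatorial inequality $s_{[Z_K]+h}+s_{-h}\ge Z_K$, which you leave open. The paper avoids this entirely: it applies \eqref{eq:sell'} to $\ell'=Z_K-s_{-h}-\ell$ with $\ell\in L$ large, so that $\ell'\le 0$ and hence $s(\ell')=s_{[Z_K]+h}$ automatically; the auxiliary $\ell$ is then removed via the short exact sequence $0\to\cO_{\tilde X}(-Z_K+s_{-h})\to\cO_{\tilde X}(-Z_K+s_{-h}+\ell)\to\cO_\ell(-Z_K+s_{-h}+\ell)\to 0$, in which $h^0$ of the rightmost term vanishes by Serre duality, the formal function theorem and Lipman's vanishing. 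This delivers \eqref{eq:MAIN8} directly, so \eqref{thm:kappaCoh3b} reduces purely to the vanishing---no inequality is ever needed.

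For the vanishing itself, your expected mechanism (a tailored Laufer computation sequence together with Artin's $\chi\ge 1$ criterion) is not how the paper proceeds. The paper reduces, via the formal function theorem and Serre duality (using that $\cO_{\tilde X}(Z_K+K_{\tilde X})$ is trivial for rational $(X,0)$), to showing $h^0(\cO_\ell(\ell-s_h))=0$ for $\ell\gg 0$. This in turn is deduced from the companion vanishing $h^0(\cO_\ell(\ell-r_h))=0$, which is the generalized Grauert--Riemenschneider Theorem~\ref{thm:GGR} plus Serre duality, by a short diagram chase that compares the two exact sequences for $s_h$ and $r_h$ and uses the identification $H^0(\cO_{\tilde X}(-r_h))=H^0(\cO_{\tilde X}(-s_h))$ together with Lipman's vanishing. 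No combinatorial computation-sequence argument enters at this stage.
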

\begin{proof}
To prove~\eqref{thm:kappaCoh1} consider $\ell'=-\ell_C'$ and note that $s(\ell')=s_{-h}$ (see~\ref{ss:GLA}),
then this part is a consequence of~\eqref{eq:sell'}.
For part~\eqref{thm:kappaCoh2}, consider the short exact sequence
of sheaves $0\to \cO_{{\tilde X}}(-\widetilde{C})\to \cO_{{\tilde X}}\to \cO_{\widetilde{C}}\to 0$. Since $\widetilde {C}$ is Stein, $H^1(\cO_{\widetilde{C}})=0$, hence we have
the exact sequence $$H^0(\cO_{\tilde X})\stackrel{\nu}{\longrightarrow}
 H^0(\cO_{\widetilde{C}})\to H^1(\cO_{{\tilde X}}(-\widetilde{C}))
\to H^1(\cO_{{\tilde X}})\to 0.$$
Now $H^0({\tilde X},\cO_{\tilde X})=\cO_{X,0}$ and $\nu$ factorizes as $\cO_{X,0}\stackrel{q}
\to \cO_{C,0}\stackrel{n}{\to}
\cO_{\widetilde{C}}$, where $q$ is onto and $n$ is the normalization, hence $\dim\, {\rm coker} (\nu)=\delta(C)$.

Part~\eqref{thm:kappaCoh3a} follows from the definition of rationality and Lipman's vanishing~\ref{thm:Lipman}.
Part~\eqref{thm:kappaCoh3c} follows from the fact that
$\ell'_C+\widetilde{C}$ is numerically trivial, hence by \ref{ss:LDCG} $\cO_{{\tilde X}}(\ell'_C+\widetilde{C}) $
is an analytically trivial line bundle. Finally we prove~\eqref{thm:kappaCoh3b}.

Consider the cycle $\ell':=Z_K-s_{-h}-\ell$ with $\ell\in L$. Then $[\ell']=[Z_K]+h$.
Take the $E$--coefficients of $\ell$ sufficiently  large so that $\ell'\in L'_{\leq 0}$.
In this case (see~\ref{ss:GLA}) $s(\ell')=s_{[Z_K]+h}$. Therefore~\eqref{eq:sell'} gives
$$h^1(\cO_{{\tilde X}}(-Z_K+s_{-h}+\ell))-\chi(Z_K-s_{-h}-\ell)= h^1(\cO_{{\tilde X}}(-s_{[Z_K]+h})-\chi(s_{[Z_K]+h}).$$
By Lipman's vanishing~\ref{thm:Lipman} this transforms into 
\begin{equation}\label{eq:MAIN6}
h^1(\cO_{{\tilde X}}(-Z_K+s_{-h}+\ell))-\chi(s_{-h}+\ell)= -\chi(s_{[Z_K]+h}).
\end{equation}
Next consider the exact sequence
$$0\to \cO_{{\tilde X}}(-Z_K+s_{-h})\to
\cO_{{\tilde X}}(-Z_K+s_{-h}+\ell)\to
\cO_{\ell}(-Z_K+s_{-h}+\ell)\to 0.$$
Then by Serre duality, the formal function theorem (see~\cite{GriffithsHarris})
(for $\ell\gg 0$), and Lipman's vanishing
$$
h^0(\cO_{\ell}(-Z_K+s_{-h}+\ell)))=h^1(\cO_{\ell}(Z_K-s_{-h}+K_{\tilde X}))=h^1(\cO_{{\tilde X}}(Z_K-s_{-h}+K_{\tilde X}))=0.
$$
Hence
\begin{equation}\label{eq:MAIN7}
\begin{aligned}
h^1(\cO_{{\tilde X}}(-Z_K+s_{-h}+\ell))&=h^1(\cO_{{\tilde X}}(-Z_K+s_{-h}))+
h^1(\cO_{\ell}(-Z_K+s_{-h}+\ell))\\
&=h^1(\cO_{{\tilde X}}(-Z_K+s_{-h}))-
\chi(\cO_{\ell}(-Z_K+s_{-h}+\ell))
\end{aligned}
\end{equation}
Then~\eqref{eq:MAIN6} and~\eqref{eq:MAIN7} combined (and
$\chi(\cO_{\ell}(\tilde{\ell}))=\chi(\ell)+(\ell,\tilde{\ell})$) give
\begin{equation}\label{eq:MAIN8}
h^1(\cO_{{\tilde X}}(-Z_K+s_{-h}))-\chi(s_{-h})= -\chi(s_{[Z_K]+h}).
\end{equation}
In particular, what remains to verify  is  the following vanishing statement.
\begin{prop}\label{prop:VAN} If $(X,0)$ is rational then for any $h\in H$ one has
$$h^1(\cO_{{\tilde X}}(-Z_K+s_h))=0.$$
\end{prop}
\noindent
This, by the formal function theorem, is equivalent to the vanishing
$h^1(\cO_{\ell}(-Z_K+s_{h}))=0$ for $\ell\in L$ and $\ell\gg 0$.
This by Serre duality is equivalent to
$h^0(\cO_{\ell}(\ell-s_h))=0$ (since $\cO_{{\tilde X}}(Z_K+K_{\tilde X})$ is trivial for $X$ rational),
which will be shown next.

First notice that by the generalized Grauert--Riemenschneider vanishing theorem~\ref{thm:GGR}
and Serre duality
\begin{equation}\label{eq:MAIN9}
h^0(\cO_{\ell}(\ell-r_h))=h^1(\cO_{\ell}(-Z_K+r_{-h}))=0.
\end{equation}
Then, consider the diagram
$$\begin{array}{ccccccccc}
0 & \to & H^0(\cO_{{\tilde X}}(-s_h)) & \stackrel{\gamma}{\longrightarrow} &
H^0(\cO_{{\tilde X}}(\ell-s_h)) & \to &
 H^0(\cO_{\ell} (\ell-s_h)) &
\to & H^1(\cO_{{\tilde X}}(-s_h))\\
 & &
 \Big\downarrow\vcenter{%
 \rlap{$\scriptstyle{\alpha}$}}   & &   \Big\downarrow\vcenter{%
 \rlap{$\scriptstyle{\beta}$}} & & & & \\
0 & \to & H^0(\cO_{{\tilde X}}(-r_h)) & \stackrel{\omega}{\longrightarrow} &
 H^0(\cO_{{\tilde X}}(\ell-r_h)) & \to &
H^0( \cO_{\ell} (\ell-r_h)) & &
 \end{array} $$
By~\eqref{eq:MAIN9} $\omega$ is an isomorphism.
Since $s_h=s(r_h)$ (see~\ref{ss:GLA}) and  $\cF(\ell')=\cF(s(\ell'))$ (cf.~\cite{NemOSZ}),
$\alpha$ is also an isomorphism. In particular, $\beta\circ \gamma$ is an isomorphism.
But both $\beta$ and $\gamma$ are injective (inclusions), hence both should be
isomorphisms. Finally notice that
$H^1(\cO_{{\tilde X}}(-s_h))=0$ by Lipman's vanishing. Hence $H^0(\cO_{\ell}(\ell-s_h))=0$.
\end{proof}

\begin{rem}
In the formulation of Theorem~\ref{thm:kappaCoh}\eqref{thm:kappaCoh3} we listed those properties,
which (together with Lipman's vanishing) basically obstruct the identity $\kappa_X(C)=\delta(C)$.
In fact, \eqref{thm:kappaCoh3a} is
exactly  the rationality of $(X,0)$. Regarding  the other two properties the following examples show that they
might fail too if $(X,0)$ is not rational.
\end{rem}

\begin{exam}\label{ex:DOESNOT}
Though in the study of normal surface singularities the term
$\chi(s_h)$ appears rather frequently, till this work the authors were not aware
of the topological  identity $\chi(s_{-h})=\chi(s_{[Z_K]+h})$, valid
for rational  singularities.
We wish to emphasize that this identity is not true in general. Take e.g.~the star--shaped graph
with central vertex $E_0$ decorated by $-1$, and with four legs, each
of length one, decorated by $-4$, $-4$, $-4$ and $-10$.
Then, $  Z_K=(26/3,8/3,8/3,8/3,5/3)$,  $s_{[Z_K]}=(8/3,2/3,2/3,2/3,2/3)$ and
$ \chi(s_{[Z_K]})=-2$. Hence $\chi(s_{-h})=\chi(s_{[Z_K]+h})$ might fail even for $h=0$.
\end{exam}

\begin{exam}\label{ex:NON}
For  non-rational singularities $(X,0)$, even if $H=0$, we cannot expect the  identity
$\kappa_X(C)=\delta(C)$  in general.
Indeed, take e.g. the non-rational (minimally elliptic)
singularity $X=\{x^2+y^3+z^7=0\}$ and the  two topologically equivalent  reduced curve germs on $X$:
$C$ is the Cartier divisor cut out by $z=0$, and $\widetilde{C}_1$ any generic smooth transversal
curvette supported by the same irreducible exceptional divisor as $\widetilde{C}$ on the minimal
good resolution of $X$. Then $C$ is the plane cusp $\{z=x^2+y^3=0\}$ with $\delta(C)=1$, while
$C_1$ is smooth (see e.g.~\cite[Example 9.4.3]{Nem-PS}) with $\delta(C_1)=0$.
On the other hand, since the embedded topological types coincide  $\kappa_X(C)=\kappa_X(C_1)$.
In fact, by~\eqref{eq:kappanew} and Example~\ref{ex:ratnew} applied for Cartier divisor $C$ we have
$\kappa_X(C)= \chi(-\ell_C')+h^1(\cO_{{\tilde X}})=\delta(C)+p_g(X)=1+1=2$.
Let us analyze this example from the point of view of (c). Since $C$ is Cartier, (c)  is true
for $C$, and by~\eqref{eq:MAIN2}
$h^1({\tilde X}, \cO_{{\tilde X}}(\ell'_C))=h^1({\tilde X}, \cO_{{\tilde X}}(-\widetilde{C}))=2$.
Since $\ell_C'=\ell_{C_1}'$ we get that
$h^1({\tilde X}, \cO_{{\tilde X}}(\ell'_{C_1}))=2$ too. But, again by~\eqref{eq:MAIN2}
(and the above discussion) $h^1({\tilde X}, \cO_{{\tilde X}}(-\widetilde{C}_1))=1$.
Hence (c) is not true for (the non--Cartier)~$C_1$.
\end{exam}

\section{More examples and some delta invariant formulae}\label{s:delta}

In this section we review some facts  about the delta invariant of a curve germ,
which are relevant from the point of view of the present note. In some parts
we  follow \cite{BG80}
and \cite{StevensThesis}.

Let $(C,0)$ be the germ of a complex reduced curve singularity, let $n:(C,0)^{\widetilde{}}\to(C,0)$
be the normalization, where $(C,0)^{\widetilde{}}$ is the multigerm $(\widetilde{C},n^{-1}(0))$.
The delta invariant is defined as $\dim_{{\CC}}n_*\cO_{(C,0)^{\widetilde{}}}/\cO_{(C,0)}$.
We also write $r$ for the number of irreducible components of $(C,0)$, as usual.
The delta invariant of a reduced curve can be determined inductively from the delta invariant of
the components and the \emph{Hironaka generalized intersection multiplicity}.
Indeed, assume $(C,0)$ is embedded in some $(\CC^n,0)$, and assume that $(C,0)$ is the union of
two (not necessarily irreducible) germs $(C_1',0)$ and $(C_2',0)$ without common irreducible components.
Assume that $(C_i',0)$ is defined by the ideal $I_i$ in $\cO_{(\CC^n,0)}$ ($i=1,2$).
Then one can define \emph{Hironaka's intersection multiplicity} by $(C_1',C_2')_{Hir}:=\dim _{\CC}\, (\cO_{(\CC^n,0)}/ I_1+I_2)$.
Then, one has the following formula of Hironaka, see~\cite{Hironaka} or~\cite[2.1]{StevensThesis} and~\cite{BG80},
\begin{equation}\label{eq:delta11}
\delta(C,0)=\delta(C_1',0)+\delta(C_2',0)+(C_1',C_2')_{Hir}.
\end{equation}
In particular, if $(C,0)$ has irreducible decomposition $\cup_{i=1}^r(C_i,0)$, and we set
$(C^j,0):=\cup_{i=j+1}^r(C_i,0)$ then by (\ref{eq:delta11}) inductively (cf. \cite{Hironaka})
\begin{equation}\label{eq:delta12}
\delta(C,0)=\sum_{i=1}^r \delta(C_i,0)+ \sum_{i=1}^{r-1} (C^i,C_i)_{Hir}.\end{equation}

\begin{exam}\label{ex:delta22} \cite{BG80,StevensThesis}
Assume that $(C,0)$ is (analytically equivalent with) the union of the coordinate
axes of $(\CC^r,0)$ (called {\it ordinary $r$--tuple}). Then using any of the above formulae  we get that $\delta(C,0)=r-1$. Furthermore, for any $(C,0)$ (since $(C_1',C_2')_{Hir}\geq 1$ always) we have $\delta(C,0)\geq r-1$. Conversely, if $\delta(C,0)= r-1$ (is the sharp minimum)
then $(C,0)$ is necessarily an
ordinary $r$--tuple.
\end{exam}

\begin{exam}\cite[3.5]{StevensThesis}
Let us fix a surface singularity $(X,0)$.
We say that $(X,0)$ is a Kulikov singularity, cf. \cite{Karras80},
if  there exists a resolution $\tilde X$, in which  the fundamental cycle $Z_{min}\in L$
(the unique minimal cycle of ${\mathcal S}\setminus \{0\}$, cf. \cite{Artin66})
 satisfies the following property:
if $(Z_{min}, E_v)<0$, then the $E_v$--multiplicity of $Z_{min}$ is one.
Assume additionally that $(X,0)$ is rational. Then, by \cite{Artin66},
 the multiplicity $r$ of $(X,0)$ is
$-Z_{min}^2$ and the embedded dimension
of $(X,0)$ is $r+1$. Let $f$ be the generic linear function of $(\CC^{r+1},0)$, it induces the
`generic linear section'  of $(X,0)$. Let $(C,0):=\{f=0\}\cap  (X,0)\subset (X,0)$.
Then $(C,0)$ is embedded
in the linear hyperplane $\{f=0\}$ of $(\CC^{r+1},0)$, hence in some $\CC^r$. Furthermore,
since the maximal ideal of $\cO_{(X,0)}$ has no base point, $(C,0)$ is reduced,
and
$r=-Z_{min}^2=(\widetilde{C},Z_{min})=(\widetilde{C},E)$
shows that in fact $(C,0)$ has $r$ irreducible components. The Kulikov property
(the function $f$ intersects each components with multiplicity one) guarantees that each
irreducible component of $(C,0)$ is smooth. Again, by base point freeness, all the $r$ smooth components of $(C,0)$ in $\{f=0\}=(\CC^r,0)$ are in general position, hence
$(C,0)$ in fact is an ordinary $r$--tuple. (For more see e.g. \cite{StevensThesis}.)

Therefore, we obtain, that the generic linear section $(C,0)$ of a rational
Kulikov  singularity is an ordinary $r$--tuple, with $\delta (C,0)=r-1$, where
$r$ is the multiplicity of $(X,0)$.
\end{exam}
\begin{exam}\label{ex:delta33}
Consider again a rational Kulikov singularity, and set $r_v:=-(Z_{min},E_v)$.
Then $r=\sum _vr_v$, and the strict transform $\widetilde {C}$ of the
generic linear section $(C,0)$ intersects each $E_v$
along  $r_v$ transversal components.
Consider any sub--collection $(C',0)$ of these components with total number  $r'$.
 Since $(C,0)$ is an ordinary
$r$--tuple, $(C',0)$ is an ordinary $r'$--tuple too. In particular, $\delta(C',0)=r'-1$.

Set any collection $\{r_v'\}_v$ with $0\leq r_v'\leq r_v$, and consider a curve germ $(C',0)
\subset (X,0)$ with $r'=\sum_vr_v'$ components,
such that its strict transform $\widetilde {C}'$
intersects $E_v$ transversally via $r_v'$
components. We claim that  $\delta(C',0)=r'-1$ again.
Indeed, complete $(C',0)$ to a germ $(C,0)$ with $r$ components, such that its strict transform
$\widetilde {C}$ intersects each $E_v$ with $r_v$ components, and each intersection is smooth.
Then $\widetilde{C}+Z_{min}$ is a numerically trivial divisor in $\tilde X$, hence, by
(\ref{eq:picard}) it is a principal divisor ${\rm div}(\widetilde{f})$ cut out by a holomorphic function $\widetilde{f}$ of $\tilde X$. Since all the coefficients of $Z_{min}$ are positive,
$\widetilde{f}$ vanishes along $E$, hence contracts to a continuous function $f$ of $X$, this is
analytic by the normality of $X$. Hence, $(C,0)=\{f=0\}$ is in fact cut out by a function whose
total transform along $E$ is $Z_{min}$, $(C',0)$ is a sub--collection of it,
and all the above arguments apply.
\end{exam}
\begin{exam}
Finally, let us list some well
known rational Kulikov singularities: cyclic quotient (with its minimal resolution), or any
rational singularity  when the fundamental cycle is reduced.
In particular,  for all such singularities the above discussion
applies.
\end{exam}
\begin{exam}\label{ex:2proof}
 Assume that $X$ is a cyclic quotient singularity and $\tilde X$ is its minimal
resolution. Fix any $h\in H=\ZZ_d$ and consider $s_h=\sum r_v'E^*_v$ and $r_v=-(Z_{min},E_v)$
as above. There is an algorithm  in \cite[10.3.3]{NemOSZ}, which provides $\{r_v'\}_v$;
according to it one sees immediately that $r_v'\leq r_v$ for all $v$.
In particular, if $(C',0)\subset (X,0)$ such that $\pi^*(C')=\widetilde {C}'+s_h$ then
$\delta(C',0)= r'-1$,  with $r'=\sum_vr'_v$ the number of components of $C'$, cf. Example \ref{ex:delta33}.
(This is a new proof of the second part of Theorem \ref{thm:kappa}.)
\end{exam}
\begin{exam}
In general we cannot expect the identity $\delta(C)=r-1$ (cf. Example \ref{ex:delta22}), the curve $C$ (even if it has only smooth components)   is not necessarily ordinary $r$--tuple.
Consider e.g. the cyclic quotient singularity $X=\frac{1}{4}(1,3)$, whose resolution graph is the
 $\mathbb A_3$ graph. The action is $\xi*(x,y)=(\xi x, \xi^3 y)$, hence the invariant ring is
 generated by
 $u=x^4$, $v=y^4$ and $w=xy$. In particular, $X=\{uv=w^4\}$.
If  $f(x,y)=x^{12}-y^{4}$, then $f$ is invariant, hence the corresponding divisor
$C=c(\{f=0\})$ is Cartier.
It is given by $u^3=v$ on $X$. Therefore, $C$ is $\{uv-w^4=v-u^3=0\}$, isomorphic with the plane curve singularity $\{w^4=u^4\}$ with $r=4$ and $\delta(C)=6$. (Clearly, $h=0$ and $\ell'_C\not=s_h$.)
\end{exam}
\begin{exam}\label{ex:Bl}
It is well  known that for plane curve singularities $(C_1,C_2)_{Hir}$ coincides with the
local intersection multiplicity $(C_1,C_2)_{\CC^2,0}$ and
\begin{equation}\label{eq:deltaplanecurves}
\delta (C)=\sum_i\delta(C_i)+\sum_{i<j} (C_i,C_j)_{\CC^2,0}.\end{equation}
This can be deduced from Hironaka's formula (\ref{eq:delta12}) as well, since for plane curves
$(C',C_1\cup C_2)_{\CC^2,0}=(C', C_1)_{\CC^2,0}+(C',C_2)_{\CC^2,0}$, a property which usually
fails for non--plane germs in the general context  of Hironaka for $(-,-)_{Hir}$.

Note that for plane curve germs one also has $(C_1,C_2)_{\CC^2,0}=-(\ell_{C_1}', \ell_{C_2}')$,
the second term computed in the lattice of a good embedded resolution of the pair
$C_1\cup C_2\subset \CC^2$.  Having in mind this formula, it is natural to
 consider the following generalization.
Let $X$ be a normal surface singularity, $C_1$, $C_2$ two Weil divisors on it without common components, and let $\tilde X$ be a good embedded resolution of the pair
$C_1\cup C_2\subset X$. Then define $(C_1,C_2)_X$ as $-(\ell_{C_1}', \ell_{C_2}')$
(in the lattice of $\tilde X$), cf. \cite{mumford,Sakai84}. If the link is an
integral homology sphere then it is an integer, however in general it is a rational number.
In particular, in general does not equal the
Hironaka pairing. Even more, $(C_1,C_2)_X$ might depend on the choice of $X$.
Take e.g. $C_1$ and $C_2$ the two components of $\{z=xy=0\}$  embedded in $X_n=\{z^n=xy\}$.
Then $(C_1,C_2)_{X_n}=1/n$.

If  $C_1, C_2$ are two effective Weil divisors on a  normal surface singularity with no common components then by Theorem \ref{thm:INTR4}
$A_{X,0}(C_1+C_2)=\chi(-\ell'_{C_1}-\ell'_{C_2})-\delta(C_1\cup C_2)$ (and similarly for $C_1$ and $C_2$).
Since $\chi(-\ell'_{C_1}-\ell'_{C_2})=\chi(-\ell'_{C_1})+\chi(-\ell'_{C_2})-(\ell'_{C_1},\ell'_{C_2})$ and $\delta(C_1\cup C_2)=\delta(C_1)+\delta(C_2)+(C_1,C_2)_{Hir}$
one obtains the following statement.

\begin{prop}\label{prop:UJUJ}
\begin{equation}
\label{eq:multiplicity}
(C_1,C_2)_{Hir}=(C_1,C_2)_X+A_{X,0}(C_1)+A_{X,0}(C_2)-A_{X,0}(C_1+C_2).
\end{equation}\end{prop}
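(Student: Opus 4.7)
The plan is to derive the formula by combining three essentially independent identities, all of which have been stated earlier in the excerpt. This is not a deep result on its own; it is an algebraic reorganization of the facts.

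First, I would apply Theorem \ref{thm:INTR4}(2) three times, to the Weil divisors $C_1$, $C_2$, and $C_1+C_2$. This gives
\[
A_{X,0}(C_i)=\chi(-\ell'_{C_i})-\delta(C_i)\quad (i=1,2),\qquad A_{X,0}(C_1+C_2)=\chi(-\ell'_{C_1+C_2})-\delta(C_1\cup C_2).
\]
Since $C_1$ and $C_2$ share no common irreducible components, their strict transforms are disjoint summands of $\widetilde{C_1\cup C_2}$, and the defining equations $(\ell'_C+\widetilde{C},E_v)=0$ for all $v$ depend linearly on the strict transform. Thus $\ell'_{C_1+C_2}=\ell'_{C_1}+\ell'_{C_2}$.

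Second, I would use the quadratic nature of $\chi$ on $L'$. Because $\chi(\ell')=-(\ell',\ell'-Z_K)/2$, the usual polarization identity gives
\[
\chi(a+b)=\chi(a)+\chi(b)-(a,b).
\]
Setting $a=-\ell'_{C_1}$ and $b=-\ell'_{C_2}$ and using the definition $(C_1,C_2)_X=-(\ell'_{C_1},\ell'_{C_2})$ yields
\[
\chi(-\ell'_{C_1}-\ell'_{C_2})=\chi(-\ell'_{C_1})+\chi(-\ell'_{C_2})+(C_1,C_2)_X.
\]

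Third, I would invoke Hironaka's classical formula \eqref{eq:delta11}, namely
\[
\delta(C_1\cup C_2)=\delta(C_1)+\delta(C_2)+(C_1,C_2)_{Hir}.
\]
Substituting the second and third identities into the expression for $A_{X,0}(C_1+C_2)$ and rearranging gives
\[
A_{X,0}(C_1+C_2)=A_{X,0}(C_1)+A_{X,0}(C_2)+(C_1,C_2)_X-(C_1,C_2)_{Hir},
\]
which is the desired equation. I do not anticipate any real obstacle; the only step worth verifying carefully is the additivity $\ell'_{C_1+C_2}=\ell'_{C_1}+\ell'_{C_2}$ under the no-common-component hypothesis, since without it the formula for $\delta(C_1\cup C_2)$ would not apply either.
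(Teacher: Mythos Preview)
Your argument is correct and is essentially identical to the paper's own derivation: apply Theorem~\ref{thm:INTR4}(2) to $C_1$, $C_2$, and $C_1+C_2$, expand $\chi(-\ell'_{C_1}-\ell'_{C_2})$ via the quadratic identity for $\chi$, and use Hironaka's formula~\eqref{eq:delta11}. The only addition is that you make the additivity $\ell'_{C_1+C_2}=\ell'_{C_1}+\ell'_{C_2}$ explicit, which the paper uses without comment.
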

In particular,  if one of the divisors, say $C_1$, is Cartier,
then from Theorem \ref{thm:INTR4} $A_{X,0}(C_2)=A_{X,0}(C_1+C_2)$ and $A_{X,0}(C_1)=0$,
 which imply
$$(C_1,C_2)_{Hir}=(C_1,C_2)_X.$$
\end{exam}

\begin{exam}
Finally we wish to emphasize that all terms $s_h$, $\chi(s_h)$, $\chi(-\ell_C')$ can be very arithmetical.
In order to be more explicit, we will give another new proof (based on formulae from~\cite{NemOSZ})
of the fact that if $X$ is the cyclic quotient singularity $\frac{1}{d}(1,q)$, $\tilde X$ is its
minimal resolution, and $\ell_C'=s_h$ for some $h\in H$ ($h\not=0$), then $\delta(C)=r-1$
(cf. Theorem~\ref{thm:kappa} and Example~\ref{ex:2proof}).
This reads as follows: write $s_h=\sum_v r_vE^*_v$. Then we claim that
$$Exp:=\chi(-s_h)-\chi(s_{-h}) \ \ \mbox{equals $-1+\sum_vr_v=-1+r$}.$$
Since $\chi(-s_h)=\chi(s_h)-(s_h,Z_K)$, the expression $Exp$ transforms into
$$Exp=\chi(s_h)-\chi(s_{-h})-(s_h,Z_K).$$
Let $E_1,\ldots, E_s$  be the irreducible exceptional divisors (in this order on the bamboo $\Gamma$).
Then $[E^*_s]$ generates $H=\ZZ_d$. We set $h=[aE^*_s]$ for some $0<a<d$. We also
set $0<q'<d$ so that $qq'\equiv 1$ (mod $d$).
In the sequel we use $\{-\}$ for the fractional part, and $\lfloor -\rfloor$ for the integral part.
Then, by~\cite[10.5.1]{NemOSZ}
$$\chi(s_h)=\chi(s_{[aE^*_s]})=\frac{a(1-d)}{2d} + \sum_{i=1}^a\, \left\{ \frac{iq'}{d}\right\}.$$
Obviously, $-h=[(d-a)E^*_s]$, hence $\chi(s_{-h})$ equals
\begin{equation*}
\frac{(d-a)(1-d)}{2d} + \sum_{j=1}^{d-a}\, \left\{ \frac{jq'}{d}\right\}
 = \frac{(d-a)(1-d)}{2d} + \sum_{i=1}^{d-1}\,\left( 1- \left\{ \frac{iq'}{d}\right\}\right)
 -\sum_{i=1}^{a-1}\,\left( 1- \left\{ \frac{iq'}{d}\right\}\right).\end{equation*}
After a computation
$$\chi(s_h)-\chi(s_{-h})=\frac{a}{d}-1 -\left\{\frac{aq'}{d}\right\}.$$
Since by~\eqref{eq:ZK} $Z_K=E-E_1^*-E_s^*$, we get
\begin{equation}\label{eq:FINAL1}
Exp=\frac{a}{d}-1 -\left\{\frac{aq'}{d}\right\}+ (E_1^*+E_s^*, s_h)+\sum_i r_i.
\end{equation}
Let us denote (as in \cite{NemOSZ}) $-(E_1^*, E_i^*)$ by $n_{i+1,s}/d$ ($1\leq i\leq s$). Then
$(E_1^*,s_h)=-(\sum_i r_i\cdot n_{i+1,s})/d$. This by (1) of \cite[Lemma 10.3.1]{NemOSZ} is
exactly $-a/d$. Hence
\begin{equation}\label{eq:FINAL2}(E_1^*,s_h)=-a/d.\end{equation}

In order to compute $(E_s^*,s_h)$ we create formally the symmetric situation.
Since $[E^*_1]=[qE^*_s]$ and  $[E^*_s]=[q'E^*_1]$,
$h=[aE^*_1]=[aq'E_1^*]$. Write $a' \equiv aq'$ (mod $d$), $0<a'< d$. Then for $h=[a'E_1^*]$ and
$s_h=s_{[a'E_1^*]}=\sum_i r_iE^*_i$ the symmetric statement of~\eqref{eq:FINAL2} is
\begin{equation}\label{eq:FINAL3}(E_s^*,s_h)=-\frac{a'}{d}=\left\lfloor
 \frac{aq'}{d}\right\rfloor - \frac{aq'}{d}.\end{equation}
Then~\eqref{eq:FINAL1},\eqref{eq:FINAL2}, and~\eqref{eq:FINAL3} combined give $Exp=-1+r$.

Having in mind similar manipulations (e.g.~with Dedekind sums) the identity
$\chi(-s_h)-\chi(s_{-h})=-1+r$ can be interpreted as a \emph{reciprocity law}.
\end{exam}


\begin{thebibliography}{10}

\bibitem{AC}
N.~A'Campo, \emph{La fonction z\^eta d'une monodromie}, Comment. Math. Helv.
  \textbf{50} (1975), 233--248.

\bibitem{Artin66}
M.~Artin, \emph{On isolated rational singularities of surfaces}, Amer. J. Math.
  \textbf{88} (1966), 129--136.

\bibitem{Blache-RiemannRoch}
R.~Blache, \emph{Riemann-{R}och theorem for normal surfaces and applications},
  Abh. Math. Sem. Univ. Hamburg \textbf{65} (1995), 307--340.

\bibitem{BG80}
R.-O. Buchweitz and G.-M. Greuel, \emph{The {M}ilnor number and deformations of
  complex curve singularities}, Invent. Math. \textbf{58} (1980), no.~3,
  241--281.

\bibitem{CDGZ-Moncurve}
A.~Campillo, F.~Delgado, and S.~M. Gusein-Zade,
\emph{On the monodromy of a plane curve singularity and Poincar\'e series of the ring of functions on the curve},
Funct. Anal. Appl. \textbf{33} (1999), 56--57.

\bibitem{cdg}
\bysame,
\emph{The Alexander polynomial of a plane curve singularity via the ring of functions on it},
Duke Math J. {\bf 117} (2003), no. 1, 125--156.

\bibitem{CDGZ-PScurves}
\bysame, \emph{Poincar\'e series of curves on rational surface singularities},
  Comment. Math. Helv. \textbf{80} (2005), no.~1, 95--102.

\bibitem{ji-tesis}
J.I. Cogolludo-Agust{\'{\i}}n, \emph{Topological invariants of the complement
  to arrangements of rational plane curves}, Mem. Amer. Math. Soc. \textbf{159}
  (2002), no.~756, xiv+75.

\bibitem{CM}
J.I. Cogolludo-Agust\'{\i}n and J.~Mart\'{\i}n-Morales, \emph{The correction
  term for the {R}iemann-{R}och formula of cyclic quotient singularities and
  associated invariants}, Rev. Mat. Complut. \textbf{32} (2019), no.~2,
  419--450.

\bibitem{ji-JJ-numerical}
J.I. Cogolludo-Agust{\'{\i}}n, J.~{Mart{\'i}n-Morales}, and
  J.~{Ortigas-Galindo}, \emph{Numerical adjunction formulas for weighted
  projective planes and lattice point counting}, Kyoto J. Math. \textbf{56}
  (2016), no.~3, 575--598.

\bibitem{NEW}
J.I. Cogolludo-Agust{\'{\i}}n, T. L\'aszl\'o, J.~{Mart{\'i}n-Morales}, A. N\'emethi,
\emph{The delta invariant of curves on rational surface singularities II. Poincar\'e series and topological aspects} (manuscript in preparation).

\bibitem{EF}
H.~Esnault and E.~Viehweg, \emph{Two-dimensional quotient singularities deform
  to quotient singularities}, Math. Ann. \textbf{271} (1985), no.~3, 439--449.

\bibitem{EFBook}
\bysame, \emph{Lectures on vanishing theorems}, DMV Seminar, vol.~20,
  Birkh\"{a}user Verlag, Basel, 1992.

\bibitem{GrRie}
H.~Grauert and O.~Riemenschneider, \emph{Verschwindungss\"{a}tze f\"{u}r
  analytische {K}ohomologiegruppen auf komplexen {R}\"{a}umen}, Invent. Math.
  \textbf{11} (1970), 263--292.

\bibitem{GriffithsHarris}
P.~Griffiths and J.~Harris, \emph{Principles of algebraic geometry}, Wiley
  Classics Library, John Wiley \& Sons, Inc., New York, 1994, Reprint of the
  1978 original.

\bibitem{Hironaka}
H.~Hironaka, \emph{On the arithmetic genera and the effective genera of
  algebraic curves}, Mem. Coll. Sci. Univ. Kyoto Ser. A. Math. \textbf{30}
  (1957), 177--195.

\bibitem{Karras80}
U.~Karras, \emph{On pencils of curves and deformations of minimally elliptic
  singularities}, Math. Ann. \textbf{247} (1980), no.~1, 43--65.

\bibitem{Laufer-rational}
Henry~B. Laufer, \emph{On rational singularities}, Amer. J. Math. \textbf{94}
  (1972), 597--608.

\bibitem{Libgober-alexander}
A.~Libgober, \emph{Alexander polynomial of plane algebraic curves and cyclic
  multiple planes}, Duke Math. J. \textbf{49} (1982), no.~4, 833--851.

\bibitem{Libgober-characteristic}
\bysame, \emph{Characteristic varieties of algebraic curves}, Applications of
  algebraic geometry to coding theory, physics and computation (Eilat, 2001),
  Kluwer Acad. Publ., Dordrecht, 2001, pp.~215--254.

\bibitem{Lipman}
J.~Lipman, \emph{Rational singularities, with applications to algebraic
  surfaces and unique factorization}, Inst. Hautes \'{E}tudes Sci. Publ. Math.
  (1969), no.~36, 195--279.

\bibitem{MBook}
J.~Milnor, \emph{Singular points of complex hypersurfaces}, Annals of
  Mathematics Studies, No. 61, Princeton University Press, Princeton, N.J.,
  1968.

\bibitem{mumford}
D.~Mumford, \emph{The topology of normal singularities of an algebraic surface
  and a criterion for simplicity}, Inst. Hautes \'{E}tudes Sci. Publ. Math.
  (1961), no.~9, 5--22.

\bibitem{NemOSZ}
A.~N\'{e}methi, \emph{On the {O}zsv\'{a}th-{S}zab\'{o} invariant of negative
  definite plumbed 3-manifolds}, Geom. Topol. \textbf{9} (2005), 991--1042.

\bibitem{Nem-GR} \bysame, \emph{Graded roots and singularities},
 Proc. Advanced
School and Workshop on {\em Singularities in Geometry and
Topology} ICTP (Trieste, Italy),
 World Sci. Publ., Hackensack, NJ, 2007, 394--463.


\bibitem{Nem-PS}
\bysame, \emph{Poincar\'{e} series associated with surface singularities},
  Singularities {I}, Contemp. Math., vol. 474, Amer. Math. Soc., Providence,
  RI, 2008, pp.~271--297.

\bibitem{Nem-CLB}
\bysame, \emph{The cohomology of line bundles of splice-quotient
  singularities}, Adv. Math. \textbf{229} (2012), no.~4, 2503--2524.

\bibitem{Okuma-abelian}
T.~Okuma, \emph{Universal abelian covers of rational surface singularities}, J.
  London Math. Soc. (2) \textbf{70} (2004), no.~2, 307--324.

\bibitem{Ram}
C.~P. Ramanujam, \emph{Remarks on the {K}odaira vanishing theorem}, J. Indian
  Math. Soc. (N.S.) \textbf{36} (1972), 41--51.

\bibitem{Sakai84}
F.~Sakai, \emph{Weil divisors on normal surfaces}, Duke Math. J. \textbf{51}
  (1984), no.~4, 877--887.

\bibitem{StevensThesis}
J.~Stevens, \emph{Kulikov singularities, a study of a class of complex surface
  singularities with their hypeplane section}, Ph.D. thesis, Leiden, 1985.

\end{thebibliography}
\providecommand{\bysame}{\leavevmode\hbox to3em{\hrulefill}\thinspace}
\providecommand{\MR}{\relax\ifhmode\unskip\space\fi MR }
\providecommand{\MRhref}[2]{%
  \href{http://www.ams.org/mathscinet-getitem?mr=#1}{#2}
}
\providecommand{\href}[2]{#2}

\end{document}